\title{Figure's annotation}
\theoremstyle{plain}
\newtheorem{theorem}{Theorem}[section]
\newtheorem{remark}[theorem]{Remark}
\newtheorem{example}[theorem]{Example}
\theoremstyle{plain}
\newtheorem{lemma}[theorem]{Lemma}
\newtheorem{proposition}[theorem]{Proposition}
\newtheorem{assumption}[theorem]{Assumption}
\numberwithin{equation}{section}
\providecommand{\abs}[1]{\lvert#1\rvert}
\providecommand{\norm}[1]{\lVert#1\rVert}
\newcommand{\CC}{\mathbb{C}}
\newcommand{\RR}{\mathbb{R}}
\newcommand{\NN}{\mathbb{N}}
\newcommand{\Dt}{\dfrac{d}{dt}}
\newcommand{\intO}{\int_{\Omega}}
\DeclareMathOperator{\divv}{div}
\DeclareMathOperator{\tr}{tr}
\DeclareMathOperator*{\esssup}{ess\,sup}
\DeclareMathOperator{\Sym}{Sym}
\newcommand{\Ss}{\mathbf{S}}
\newcommand{\dx}{\mathrm{d}\, x}
\newcommand{\ds}{\mathrm{d}\, s}
\newcommand{\dt}{\mathrm{d}\, t}
\DeclareMathOperator{\D}{D}
\newcommand{\del}{\partial}
\def\XXint#1#2#3{{\setbox0=\hbox{$#1{#2#3}{%
\int}$ }
\vcenter{\hbox{$#2#3$ }}\kern-.6\wd0}}
\def\edc{\end{document}}
\numberwithin{equation}{section}
\title[Compressible non-Newtonian fluids with initial vacuum]{Remark on the local well-posedness of compressible non-Newtonian fluids with initial vacuum}
\author[Al Baba]{Hind Al Baba}
\author[Al Taki]{Bilal Al Taki}
\author[Hussein]{Amru Hussein} 
\address{Department of Mathematics,
	RPTU Kaiserslautern-Landau (formerly TU Kaiserlautern),  Paul-Ehrlich-Stra\ss e 31, 67663 Kaiserslautern, Germany}
\email{hind.albaba@gmail.com}
\email{bilal.altaki.math@gmail.com}
\email{hussein@mathematik.uni-kl.de}
\thanks{The first and the second author have  been supported partially by the Nachwuchsring – Network for the promotion of young scientists – at RPTU Kaiserslautern-Landau. All three authors have been  supported by MathApp – Mathematics Applied to Real-World Problems - part of the Research Initiative of the Federal State of Rhineland-Palatinate, Germany
}
\subjclass[2020]{Primary 76A05; Secondary 35Q35, 35K65, 35J62 } 
\keywords{non-Newtonian fluids; vacuum; strong solutions; blow-up criterion}
\begin{document}
\maketitle

\begin{abstract}
We discuss in this short note the local-in-time strong well-posedness of the compressible Navier-Stokes system for non-Newtonian fluids on the three dimensional torus. We show that the result established recently by Kalousek, M\'{a}cha, and Ne\v{c}asova in \doi{10.1007/s00208-021-02301-8} can be extended to the case where vanishing density is allowed initially. Our proof builds on the framework developed by Cho, Choe, and Kim in \doi{10.1016/j.matpur.2003.11.004} for compressible Navier-Stokes equations in the case of Newtonian fluids.
 To adapt their method, special attention is given to the elliptic regularity 
of a challenging nonlinear elliptic system. We show particular results in this direction, however, the main result of this paper is proven in the general case 
when  elliptic $W^{2,p}$-regularity  is imposed as an assumption.
 Also, we give a finite time blow-up criterion.
\end{abstract}

\section{Introduction}
The aim of this paper is to show the existence, uniqueness, and continuous dependence on the data of local-in-time strong solutions 
to the Navier–Stokes equations describing non-Newtonian compressible fluids. Here, the main challenge arises from the fact that we admit  initial densities $\rho_0\geq 0$ vanishing on some subset, that is, there is some vacuum initially. 
For simplicity we restrict ourself to the $d$-dimensional torus, i.e., $\Omega=\mathbb{T}^d$. We consider for $T\in (0,\infty]$
\begin{equation}\label{depart}
\begin{array}{cl}
\del_{t}\rho+\divv(\rho u)=0 & \hbox{ in } \Omega \times (0,T),\\
\del_{t}(\rho u)+\divv(\rho u\otimes u)-\divv \Ss u +\nabla p=\rho f & \hbox{ in }  \Omega \times (0,T),\\
\rho(0)=\rho_0 \quad \hbox{and}
\quad u(0)=u_0 &\hbox{ in }  \Omega,
\end{array}
\end{equation}
where $u\colon \Omega\times (0,T)\rightarrow \RR^d$ is the velocity field of the fluid, $\rho\colon \Omega \times (0,T) \rightarrow \RR$ is its density, the pressure $p=p(\rho)$ is a function of the density $\rho$, where $p\colon [0, \infty) \rightarrow \RR^+$ is assumed here to be a $C^2$-function of the density.
Here, $\Ss u=(\Ss_{ij}u)_{1\leq i,j\leq d}$ represents the stress tensor and $f$ is the external body force.
We restrict ourselves to the following constitutive law  
\begin{align}\label{stress-tensor}
\begin{split}
\Ss u&= 2\mu(|\D(u)|^2)\D(u)+\lambda(\divv u)\divv u \, \mathbb{I},
\end{split}
\end{align}
where $\D(u)=\tfrac{1}{2}(\nabla u + (\nabla u)^T)$ is the symmetric part of the gradient, $\mathbb{I}$ denotes the identity in $\RR^d$, and
$\mu\in C^1([0,\infty),\RR)$ and $\lambda\in C^1(\RR,\RR)$ 
satisfying some ellipticity conditions~\eqref{eq:ellipticity2}--\eqref{eq:ellipticity} discussed below.

The result presented here fills the gap between two types of results:  
On one side, in \cite{sarka, sarka2}, Kalousek, M\'{a}cha,  and Ne\v{c}asova proved the local-in-time existence and uniqueness of a strong solution 
to system \eqref{depart}-\eqref{stress-tensor}  working in Lagrangian coordinates, where it is crucial for their method to exclude vacuum at initial state, that is to assume $\rho_0\geq \delta$ for some constant $\delta>0$, see also   \cite{Pruss2007} for results in the same direction.
On the other side, considering a compressible Newtonian fluid, that is system \eqref{depart}-\eqref{stress-tensor} with $\mu,\lambda$ constant, Cho, Choe, and Kim 
proved local strong existence and uniqueness results in \cite{choe_kim1} including vacuum at initial state, that is $\rho_0\geq 0$ assuming certain compatibility conditions. We aim here to extend these results to include both non-Newtonian fluids \emph{and} initial vacuum adapting the ideas developed in \cite{choe_kim1}. Also, the finite time blow-up criterion from \cite{choe_kim1} carries over to the situation discussed here.

Depending on the characteristics of a fluid or gas one  distinguishes different types of Navier-Stokes equations. Its classical version deals with  incompressible Newtonian fluids with constant density and constant viscosity, however there are more complex  fluids described by the negations of these properties, that is, compressible, non-Newtonian fluids with non-constant viscosities and there are combinations of these different characteristics, and  also different descriptions of the pressure in the compressible case.
In terms of mathematics, considering the different space dimensions  and  weak or strong solutions also gives rise to different situations  requiring very different methods. In most of these situations vanishing initial densities have been studied already. 
The starting point for the study of initial vacuum seems to be the work by 
Salvi and Stra\v{s}kraba \cite{Salvi1993} for strong and Lions \cite{Lions_1993a, Lions_1993b, Lions_1996book, Lions_1998book} for weak solutions in dimensions larger or equal to two. 
The work \cite{Salvi1993} already contains the compatibility conditions \eqref{comp-cond} given below of which Cho, Choe, and Kim showed that it is indeed necessary, compare \cite[Theorem 9]{choe_kim1}. The problem of vanishing initial vacuum attracted a lot of interest and there is a large literature on the subject, see e.g. for recent results \cite{Li2023, Danchin2019, Huang2021, Li2020, Li2019}.
  Strong solutions in the case of non-Newtonian fluids with vanishing initial density seem to be addressed so far only in the one dimensional situation, compare e.g. \cite{Muhammad2020, Fang2018, Yuan2012, Yuan2016, Song2013, Nishida1986} and the references therein, and also \cite{Straskraba2010} for an overview on one-dimensional models in fluid mechanics.
  In the three dimensional case there are 
 results for non-Newtonian fluids dealing with weak solutions, see \cite{Zhikhov, Mamotov} and with dissipative solutions, see \cite{Fei-Nov-diss} and also \cite{Feireisl2021} for a weak-strong uniqueness result. Here, we complement this by discussing strong solutions for non-Newtonian fluids in dimension three.
  

The primary challenge in exploring strong solutions 
to \eqref{depart} lies in establishing higher norm estimates on the velocity. In \cite{sarka, sarka2}, the authors successfully tackle this challenge by assuming an absence of vacuum in the initial state. In the linearisation of \eqref{depart}, the function $u_t$ in \eqref{depart} 
is multiplied by the non-vanishing  positive function $\rho$, cf. \cite[Equation (19)]{sarka},  and consequently higher estimates on the velocity are deduced using maximal $L_t^p$-$L^q_x$-regularity methods such as the Weis multiplier theorem. 
This approach becomes ineffective in the presence of initial vacuum. Therefore, the  strategy here is to  adapt the method introduced in \cite{choe_kim1} for the Newtonian  case to the non-Newtonian  case studied here. To this end, it is crucial to pay  attention to the elliptic system associated with equation \eqref{depart}. 
Formally, system~\eqref{depart} consists  of a coupled hyperbolic transport-type equation for the density and a  parabolic diffusion-type equation for the velocity.   
However, for vanishing density the velocity equation becomes a mixed elliptic-parabolic problem, see e.g. \cite{pluschke} for a related setting. To deal with the highly non-linear elliptic part some additional estimates are needed.  Thus, for Newtonian fluids, the linear elliptic regularity plays an important role, e.g. in \cite[Section 5]{choe_kim1}. However, 
$W^{2,p}$-elliptic regularity for non-linear elliptic systems does not hold in general and therefore, we impose it
here by  Assumption~\ref{assumption-elliptic-reg}.  This assumption and the regularity for non-linear elliptic systems is discussed in detail in Section~\ref{section-elliptic-problem}. With this at hand we can derive the \textit{a priori} bounds needed to prove our main result on  the strong local well-posedness  presented in  Section~\ref{Section-Main-result}. The proof is given in Section~\ref{sec:proof}.

\subsection*{Notation}
We consider throughout the paper the case $\Omega=\mathbb{T}^d$, where $\mathbb{T}^d$ denotes the $d$-dimensional flat torus. By $L^p(\Omega)$ for $p\in [1,\infty]$ we denote the usual Lebesgue spaces setting
$L^p_0(\Omega):=\{u\in L^p(\Omega)\colon \int_{\Omega}u=0\}$. 
The Sobolev spaces with periodic boundary conditions are denoted by
 $W^{m,p}(\Omega)$ where $m\in \mathbb{N}$ and $p\in [1,\infty]$. We use the notation $H^m(\Omega):=W^{m,2}(\Omega)$ and set $H^{-m}(\Omega)$ to be the dual space of $H^m(\Omega)$. 
  Recall that for $u\in W^{1,p}(\Omega)\cap L^p_0(\Omega)$ and $p\in [1,\infty)$ a Poincar\'e inequality $\norm{u}_{L^p(\Omega)}\leq C \norm{\nabla u}_{L^p(\Omega)}$ holds. For a Banach space $X$ and an interval $I\subset \RR$ we denote by $L^q(I,X)$ for $q\in [1,\infty]$ the usual Bochner spaces and by $C(I,X)$ the space of continuous functions on $I$ with values in $X$. If $H$ is a Hilbert space, then we denote its scalar product by $\langle\cdot,\cdot\rangle_H$, and we omit the subscript if there is no ambiguity. For two matrices $A,B$ of the same size we set $A:B=\sum_{i,j}a_{ij}b_{ij}$. Throughout the paper we consider only spaces over $\RR$.

\section{Main result}\label{Section-Main-result}

Considering the stress tensor in \eqref{stress-tensor} we  impose the following ellipticity conditions. Let
\begin{align}\label{eq:ellipticity2}
    \varepsilon_{\mu}>0, \quad \hbox{and}\quad \varepsilon_{\lambda}\in \RR \quad \hbox{with} \quad
    2\varepsilon_{\mu} + 3\varepsilon_{\lambda} >0
\end{align}
be constants such that the continuously differentiable functions $\mu\colon [0,\infty)\rightarrow \RR$ and $\lambda\colon  \RR\rightarrow \RR$ satisfy
\begin{align}
\begin{split} \label{eq:ellipticity}
\mu(s)\geq&\varepsilon_{\mu},\quad  \mu(s) \,+\, 2\,s\mu^{\prime}(s) \geq \varepsilon_{\mu} \quad \hbox {for all } s\geq 0, \quad \mathrm{and}, \\ 
 \lambda(r)\geq&\varepsilon_\lambda, \quad  \lambda(r)\,+\, r\lambda^{\prime}(r)\geq \varepsilon_{\lambda} \quad \hbox {for all } r\in \RR.
 \end{split}
\end{align}
These requirements become more transparent when, after applying the chain rule, one rewrites 
\begin{multline}\label{eq:S_chainrule}
\divv \Ss u = 2\mu(|\D (u)|^2)\divv \D  u + 4\mu'(|\D (u)|^2)\big(\sum_{i=1}^d \langle \partial_i \D (u),\D (u)\rangle(\D (u))_i\big) \\ +\left(\lambda(\divv u) +\lambda'(\divv u)\divv u \right)\nabla \divv u.
\end{multline}
Also, \eqref{eq:ellipticity2}--\eqref{eq:ellipticity} imply together with the regularity $\mu,\lambda\in C^1$ that $\Ss$ is a strictly monotone operator on $W^{1,p}(\Omega)$.

Next, we present 
our assumption on the elliptic regularity of the nonlinear elliptic system associated with~\eqref{depart}. This assumption is discussed in details in the subsequent Section~\ref{section-elliptic-problem}. 


\begin{assumption}[$W^{2,p}$-regularity]\label{assumption-elliptic-reg}
 Let $p\in (1,\infty)$, then we assume that the non-linear elliptic problem 
\begin{equation}\label{eq:ellip-prob} 
-\divv \Ss u = f,
\end{equation}
has for each $f\in L_0^p(\Omega)^d$ a unique solution $u\in W^{2,p}(\Omega)^d$ with $\int_{\Omega} u\, dx = 0$, and  there exists a constant $C>0$ such that
\begin{align}\label{eq:W2_estimate}
\norm{u}_{W^{2,p}(\Omega)}\leq C \norm{f}_{L^p(\Omega)} \quad \hbox{for all } f\in L_0^p(\Omega)^d.
\end{align}
\end{assumption}


\begin{theorem}[Main result]\label{Main-Theorem} 
Let $d=3$,
\begin{align*}
q\in (3,\infty) \quad \hbox{and} \quad q_{0}=\min\{6,q\}.
\end{align*}
Suppose that $\mu\in C^1([0,\infty),\RR)$  and $\lambda\in C^1(\RR,\RR)$ are functions satisfying \eqref{eq:ellipticity2}--\eqref{eq:ellipticity}   such that Assumption~\ref{assumption-elliptic-reg} holds for $p=2$ and $p=q_0$, and
let $p(\cdot)\in C^{1}([0, \infty),\RR^+)$. Assume that the data 
satisfy the following regularity conditions 
\begin{eqnarray}
\rho_{0}\in  W^{1,q}(\Omega), & & u_{0}\in  H^{2}(\Omega) \quad\hbox{with}\quad \int_{\Omega} u_0\, dx = 0, \label{regcond1}\\
f\in C([0,T]; L^{2})\cap L^{2}(0,T; L^{q}(\Omega)) & \mathrm{and} & f_{t}\in L^{2}(0,T; H^{-1}(\Omega)) \label{regcond2},
\end{eqnarray} 
 and the compatibility condition 
\begin{equation}\label{comp-cond}
-\divv \Ss u_0 + \nabla p(\rho_0) = \rho_0^{1/2} g \quad \mbox{for some }\quad g\in L^2(\Omega).
\end{equation}
Then there exist a time $T_{\ast}\in (0,T]$ and a unique strong solution $(\rho, u)$ to the non-linear problem~\eqref{depart} such that 
\begin{eqnarray}\label{reg-sol}
\begin{split}
\rho\in C([0,T_{\ast}];  W^{1,q_{0}}(\Omega)), & & u \in C([0,T_{\ast}];  H^{2}(\Omega))\cap L^{2}(0,T_{\ast}; W^{2,q_{0}}(\Omega)), \\
\rho_{t}\in C([0,T_{\ast}]; L^{q_{0}}(\Omega)), & & u_{t}\in L^{2}(0,T_{\ast}; H^{1}(\Omega)) \quad \mathrm{and} \quad \sqrt{\rho} u_{t}\in L^{\infty}(0,T_{\ast}; L^{2}(\Omega)).
\end{split}
\end{eqnarray}
\noindent
Furthermore, we have the following blow-up criterion: If $T^{\ast}$ is the maximal existence time of the strong solution $(\rho, u)$ and $T^\ast<T$, then 
\begin{equation}\label{blowup}
\underset{t \rightarrow T^\ast}\limsup \big( \| \rho \|_{W^{1,q_0}} + \| u(t) \|_{H^{1}}\big) = \infty.
\end{equation}
\end{theorem}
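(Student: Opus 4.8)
The plan is to follow the by-now classical Cho--Choe--Kim scheme, adapted to the non-Newtonian stress tensor \eqref{stress-tensor} via Assumption~\ref{assumption-elliptic-reg}. The argument splits into four stages: (i) an iteration scheme and linearized problems, (ii) uniform \emph{a priori} estimates, (iii) passage to the limit and uniqueness/continuous dependence, and (iv) the blow-up criterion. First I would set up the iteration: given the previous velocity iterate $u^{k}$, solve the linear transport equation $\partial_{t}\rho^{k+1}+\divv(\rho^{k+1}u^{k})=0$ with $\rho^{k+1}(0)=\rho_{0}$ (here one needs $u^{k}\in L^{1}(0,T;W^{1,\infty})$, which $H^{2}\hookrightarrow W^{1,\infty}$ in $d=3$... actually $H^2\hookrightarrow C^{0,1/2}$ only, so one uses $W^{2,q_0}$ regularity in time-integrated form) to propagate the $W^{1,q_{0}}$ bound on the density and the $L^{q_0}$ bound on $\rho_t$. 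Then solve the linearized momentum equation
\begin{equation*}
\rho^{k}\partial_{t}u^{k+1}+\rho^{k}u^{k}\cdot\nabla u^{k+1}-\divv\mathbf{S}u^{k+1}=-\nabla p(\rho^{k})+\rho^{k}f,
\end{equation*}
where the nonlinear elliptic operator $-\divv\mathbf{S}$ is kept at the new iterate. Because $\rho^{k}$ may vanish, this is a degenerate elliptic--parabolic equation; existence for the linearized-but-still-quasilinear-elliptic problem should itself be obtained by a further inner fixed-point or continuity argument, using Assumption~\ref{assumption-elliptic-reg} to invert $-\divv\mathbf{S}$ on $L^{p}_{0}$.

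The heart of the matter is stage (ii), the \emph{a priori} estimates, and this is where I expect the main obstacle. One tests the momentum equation with $u_{t}$ to get control of $\sqrt{\rho}\,u_{t}$ in $L^{\infty}_{t}L^{2}_{x}$ and of $\nabla u$ in a suitable norm; the compatibility condition \eqref{comp-cond} is exactly what makes $\sqrt{\rho}\,u_{t}(0)\in L^{2}$ and hence starts this estimate. Then one differentiates the momentum equation in time and tests with $u_{t}$ again to bound $u_{t}$ in $L^{2}_{t}H^{1}_{x}$; here $f_{t}\in L^{2}(0,T;H^{-1})$ enters. The new difficulty compared to \cite{choe_kim1} is that every elliptic estimate "$\|u\|_{W^{2,q_{0}}}\lesssim \|\rho u_{t}+\rho u\cdot\nabla u+\nabla p(\rho)-\rho f\|_{L^{q_{0}}}$" must be replaced by the \emph{nonlinear} elliptic estimate coming from Assumption~\ref{assumption-elliptic-reg}, and the extra terms in \eqref{eq:S_chainrule} involving $\mu'(|\mathbf{D}u|^{2})$ and $\lambda'(\divv u)\divv u$ must be shown to be harmless, i.e.\ absorbable into the leading elliptic term for $u$ smooth enough or treated as lower order on short time intervals using the smallness of $T_{\ast}$ and the $C^{1}$-bounds on $\mu,\lambda$ on the (bounded, by the density transport) range of the arguments. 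The upshot is a closed estimate of the form $\Phi(t)\le C_{0}+C\int_{0}^{t}\Phi(s)^{\alpha}\,ds$ for a suitable energy functional $\Phi$ bundling all the norms in \eqref{reg-sol}, valid on a short time interval $[0,T_{\ast}]$ with $T_{\ast}$ depending only on the data through \eqref{regcond1}--\eqref{comp-cond}; a continuity/bootstrap argument then gives a uniform bound on the whole sequence $(\rho^{k},u^{k})$.

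For stage (iii) I would show the iterates form a Cauchy sequence in a weaker norm — typically $\rho^{k+1}-\rho^{k}$ in $L^{\infty}_{t}L^{2}_{x}$ and $u^{k+1}-u^{k}$ in $L^{\infty}_{t}L^{2}_{x}\cap L^{2}_{t}H^{1}_{x}$ — by writing the equations for the differences, testing, and using the uniform high-norm bounds together with Gr\"onwall; shrinking $T_{\ast}$ if necessary makes the contraction constant $<1$. The limit $(\rho,u)$ inherits the regularity \eqref{reg-sol} by weak-* lower semicontinuity, and it solves \eqref{depart}; the time-continuity statements follow from the standard parabolic/transport continuity-in-time arguments (e.g.\ Lions--Magenes type interpolation for $u\in C_tH^2$, and the renormalized transport theory of DiPerna--Lions for $\rho\in C_tW^{1,q_0}$). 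Uniqueness and continuous dependence on the data follow from the same difference estimates applied to two solutions with possibly different data. Finally, for the blow-up criterion, one argues by contradiction: if $T^{\ast}<T$ but $\limsup_{t\to T^{\ast}}(\|\rho\|_{W^{1,q_{0}}}+\|u(t)\|_{H^{1}})<\infty$, then re-running the \emph{a priori} estimates with this bound (note $\sqrt{\rho}u_t\in L^\infty_tL^2_x$ up to $T^*$ is recovered from the equation and the assumed bound, providing the compatibility-type data at time near $T^*$) shows all the norms in \eqref{reg-sol} stay finite as $t\to T^{\ast}$, so $(\rho,u)(T^{\ast})$ is an admissible initial datum and the local existence result extends the solution beyond $T^{\ast}$, contradicting maximality.
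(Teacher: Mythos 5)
Your plan correctly identifies the Cho--Choe--Kim scaffolding (iteration, a priori estimates, Cauchy/contraction in weak norms, blow-up by contradiction), but it misses the two devices that make the adaptation actually close.

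First, you start the transport iterate from $\rho^{k+1}(0)=\rho_{0}$, and then have to confront a genuinely degenerate parabolic problem because $\rho^{k}$ vanishes on the initial vacuum set. You acknowledge this ("a further inner fixed-point or continuity argument"), but that is precisely the hard step and your proposal does not supply it. The paper avoids it entirely: one first replaces the data by $\rho_{0}^{\delta}=\rho_{0}+\delta>0$ and defines $u_{0}^{\delta}$ as the solution of the compatibility elliptic problem $-\divv\Ss u_{0}^{\delta}=(\rho_{0}^{\delta})^{1/2}g-\nabla p(\rho_{0}^{\delta})$ (via Assumption~\ref{assumption-elliptic-reg}); the linearized continuity equation then propagates strict positivity of $\rho^{k}$, so each linearized momentum equation is uniformly parabolic and classically solvable. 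The $\delta$-uniformity of the estimates (note $\mathcal{C}_{0}=\|g\|_{L^{2}}^{2}$ is $\delta$-independent) lets one pass $\delta\to0$ at the very end by Banach--Alaoglu. Without this regularization your construction of approximate solutions does not get off the ground.

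Second, your treatment of the extra $\mu'$ and $\lambda'$ contributions in $\divv\Ss u$ as terms to be "absorbed into the leading elliptic term" or dealt with as "lower order on short time intervals using the smallness of $T_{\ast}$" is not how the estimates close, and a smallness-in-time argument would not give the clean $\delta$-independent bounds one needs. The structure the paper exploits is exact, not perturbative: when testing the momentum equation with $u_{t}^{k}$, the stress contribution is rewritten via the fundamental theorem of calculus as $\tfrac{d}{dt}\int_{\Omega}\int_{0}^{|\D u^{k}|^{2}}\mu(s)\,ds\,dx$ (and similarly for $\lambda$), which is bounded below by $\varepsilon_{\mu}^{1}\tfrac{d}{dt}\|\D u^{k}\|_{L^{2}}^{2}$; and when testing $\partial_{t}$ of the momentum equation with $u_{t}^{k}$, the chain rule produces precisely the quantity $\mu(|\D u^{k}|^{2})+2|\D u^{k}|^{2}\mu'(|\D u^{k}|^{2})\geq\varepsilon_{\mu}^{2}$ from the ellipticity hypotheses~\eqref{eq:ellipticity}, giving coercivity of $\|\D u_{t}^{k}\|_{L^{2}}^{2}$. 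The elliptic estimate of Assumption~\ref{assumption-elliptic-reg} enters separately, to convert the resulting $\sqrt{\rho^{k}}u_{t}^{k}$ bound into the $H^{2}$ (and $W^{2,q_{0}}$) bound for $u^{k}$. Conflating these two roles, or replacing the identities by absorption, is where your a priori estimate would fail to close with constants independent of $k$ and $\delta$. A minor further remark: for $\rho\in C_{t}W^{1,q_{0}}$ you invoke DiPerna--Lions, which is unnecessary here since $\nabla u\in L^{2}_{t}L^{\infty}_{x}$ makes the transport classical.
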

\begin{remark}[Notion of strong solutions]
The notion of strong solutions used here aligns with that in \cite{choe_kim1}. Specifically, a strong solution to \eqref{depart} is a weak solution that satisfies \eqref{depart} almost everywhere in $(0, T^*) \times \Omega$ and adheres to the properties in \eqref{reg-sol}.
\end{remark}

The regularity and compatibility conditions on the data assumed here for non-Newtonian fluids as well as the regularity class of the solution agree with those in the Newtonian case discussed in \cite[Theorem 7]{choe_kim1}. The statement is valid also for $d=2$, where however improved regularity can be expected. For $d=1$, the problem is already well-understood. 
In particular for $d=1$, the $W^{2,p}$-estimates in Assumption~\ref{assumption-elliptic-reg} hold in general as discussed in Proposition~\ref{1d-estimate} below.
For higher dimensions, as discussed in the introduction,  we have to assume 
$W^{2,p}$-elliptic regularity by Assumption~\ref{assumption-elliptic-reg}, because for vanishing density the velocity equation becomes a mixed elliptic-parabolic problem, and we need this assumption here to deal with the highly non-linear elliptic part.
Compared to the result in \cite{sarka, sarka2}, where it is assumed that $\rho_0\geq\delta> 0$, we obtain slightly less spatial regularity for the density compared to their result which has $\rho\in W^{1,p}(0,T_\ast;W^{1,q}(\Omega))$. The maximal $L^p_t$-$L^q_x$-regularity for $u$ obtained in \cite{sarka} for $f=0$,   $q\in (3,\infty)$, and $p\in (\tfrac{2q}{q-3},\infty)$, which is included in its correction \cite{sarka2}, is different compared  to the maximal  $L^2_t$-$L^{q_0}_x$-regularity obtained here where we admit however $f\neq 0$. Note that the regularity on $\mu,\lambda$ and $p$ can be weakened slightly  here compared to \cite{sarka} by using uniform energy bounds.

\section{Second order $L^{p}$-estimates for the non-linear elliptic system}\label{section-elliptic-problem}
In this section, we discuss Assumption~\ref{assumption-elliptic-reg}.
For $\mu,\lambda$ constants \eqref{eq:ellip-prob} becomes the linear Lam\'e system and provided the ellipticity conditions \eqref{eq:ellipticity2}--\eqref{eq:ellipticity} hold, then $W^{2,p}$-estimates of the form \eqref{eq:W2_estimate} for its solution  follow, cf. e.g. \cite[Section 5]{choe_kim1} or the general elliptic theory in \cite[Chapter 6]{pruss_simonett} and more particularly \cite{Mitrea_Lame, Pruss2007}.

One prototype problem for non-constant $\mu$ (with $\lambda \equiv 0$)
 is the ($p$-$\delta$)-structure generalizing $p$-Laplacians, where $\mu(|\D (u)|^2)=(\delta +|\D (u)|^{p-2})$ with $\delta \geq 0$,  $p\in(1,\infty)$.
 Existence and uniqueness of weak solutions in $W^{1,p}(\Omega)^d$ for this and similar problems can be derived by
the theory of monotone operators or by the calculus of variations, cf. e.g. \cite{Zeidler_II, Lions_book} and \cite{Necas_book}, respectively, 
where ellipticity conditions such as \eqref{eq:ellipticity2}--\eqref{eq:ellipticity} are essential.

Studying the higher regularity of solutions to \eqref{eq:ellip-prob} presents significant challenges due to the highly non-linear nature of this problem, which renders the well-known Caldéron-Zygmund theory ineffective. Even when replacing the symmetric part of the gradient with the gradient itself, the problem remains poorly understood. Notably, Uhlenbeck's renowned paper \cite{Uhlenbeck} addresses this issue by demonstrating that the solution to the system $-\divv(\mu(|\nabla u|^2) \nabla u) = 0$ is globally smooth, precisely in $C^{1, \alpha}_{\rm loc}(\Omega, \mathbb{R}^d)$, provided that $\mu$ is a smooth positive function satisfying conditions similar to \eqref{eq:ellipticity}.
We refer interested readers to the recent significant result by Cianchi and Maz'ya, where they established that $\mu(|\nabla u|^2) \nabla u$ belongs to $W^{1,2}(\Omega)$ if and only if $f$ is in $L^2(\Omega)$ under minimal regularity assumptions on the boundary of the domain, see \cite{Cia-Maz} and the references therein. However, it is important to note that a generalization of these results to $W^{2,q}$-estimates on $u$ is currently lacking.

Considering the symmetric part of the gradient adds further complexity to the problems at hand. Regrettably, the techniques employed in \cite{Uhlenbeck} and \cite{Cia-Maz} do not readily lend themselves to the case involving the symmetric part of the gradient. Nonetheless, specific results can be found in  literature. For instance, in \cite{Veiga} Beir{\~a}o da Veiga showed $W^{2,q}$-estimates for the $p$-Laplacian operator with symmetric part of the gradient, however only, when the parameter $p$ is close to $p=2$, see also  \cite{Bers-Ruz} and the references cited therein for more information on this topic. Counterexamples for 
$W^{2,q}$-regularity are discussed for instance in \cite{Schaftingen2009, Escauriaza2017}. 

In the rest of this section, we present our contribution on the regularity of solution to system \eqref{eq:ellip-prob}, namely, Propositions~\ref{elliptic-estimates_vega}, \ref{prop:elliptic-estimates} and \ref{rem:H2estimate}. 
Here, we focus on the second order estimates. The $W^{1,p}$-estimates and the existence and uniqueness of solutions are discussed in the concrete Example~\ref{ex:ex1}.
To begin with, in Proposition \ref{elliptic-estimates_vega}, we demonstrate for general space dimension $d$ the second order $L^p$-estimates by employing the approach introduced in \cite{Veiga} by Beir{\~a}o da Veiga, where we have to assume additional smallness conditions on the functions $\mu$ and $\lambda$ in addition to the  ellipticity conditions \eqref{eq:ellipticity2}--\eqref{eq:ellipticity}. This includes functions $\mu,\lambda$ close to constants, and $\mu$ with certain ($p$-$\delta$)-structure. 
Here, in addition to \cite{Veiga}, we are able to quantify these smallness assumptions in Example~\ref{ex:ex1}.
In the one-dimensional case $d=1$ assuming only the usual ellipticity conditions \eqref{eq:ellipticity2}--\eqref{eq:ellipticity}, we establish in Proposition \ref{prop:elliptic-estimates} the second order $L^{p}$-\textit{a priori} estimate.  Lastly, in Proposition~\ref{rem:H2estimate}, we show that the $L^2$-second order \textit{a priori} estimate 
follows for any space dimension from the ellipticity conditions \eqref{eq:ellipticity2}--\eqref{eq:ellipticity}.


\begin{proposition}[Second order $L^p$-estimate under smallness conditions]\label{elliptic-estimates_vega}
	Assume that
	\begin{align*}
	\mu\in C^1([0,\infty),\RR),\quad \lambda\in C^1(\RR,\RR), \quad \overline{\lambda}>-\tfrac{1}{2},\quad \hbox{and}\quad p\in (1,\infty).
	\end{align*}
	Let $C_{p,\overline{\lambda}}>0$ be a constants such that the solutions $u$ of the linear Lam\'{e} system
	\begin{align*}
	\divv \D(u)+\overline{\lambda}\nabla\divv u=f  
	\end{align*}
satisfy for all  $f\in L_0^p(\Omega)^d$
	\begin{align*}
	\norm{\nabla \D(u)}_{L^{p}}  +\norm{\nabla \divv u}_{L^p} \leq C_{p,\overline{\lambda}} \norm{f}_{L^p}.
	\end{align*}
	\begin{enumerate}
		\item 
		If  there exists $\alpha \in [0,1)$ and $c>0$ such that for all $s>0$
		\begin{align*}
		\tfrac{1}{\mu(s)} \leq c s^{\alpha/2}, \quad
		C_{p,\overline{\lambda}}\sup_{S \in \Sym(d,\RR)}\left|\overline{\lambda}-\tfrac{\lambda(\tr S)+\lambda'(\tr S)\tr S}{2\mu(|S|^2)}\right|\leq 1,
		\quad \hbox{and} \quad
		\delta:=C_{p,\overline{\lambda}}\sup_{s\geq 0}\tfrac{|2\mu'(s)s|}{\mu(s)} <1,
		\end{align*}
		where $\Sym(d,\RR)$ denotes the space of symmetric real $d\times d$-matrices, then a weak solution $u$ of \eqref{eq:ellip-prob} satisfies 
		for  $p>d$ if $\alpha>0$ and for any $p\in(1,\infty)$ if $\alpha=0$, and for $C>0$ depending on $c,\alpha, p, \delta$
		\begin{align*}
		\norm{\nabla \D (u)}_{L^p} \leq 
		C \norm{f}_{L^p}^{1/(1-\alpha)}.
		\end{align*}
		\item 
		For the constant 	$C_{p,\overline{\lambda}}$ we have 
		\begin{align}\label{eq:cp_estimate}
		C_{p,\overline{\lambda}} \leq 
		\begin{cases}
		d^2(p-1)(1+ \tfrac{1+2\overline{\lambda}}{2+2\overline{\lambda}}d(p-1))
		+
		d(p-1)(1+ \tfrac{1+2\overline{\lambda}}{2+2\overline{\lambda}}), & \hbox{if } p\in (2,\infty), \\
		(d^2+1)(1+ \tfrac{1+2\overline{\lambda}}{2+2\overline{\lambda}}), &  \hbox{if } p=2.
		\end{cases}
		\end{align}
	\end{enumerate}

\end{proposition}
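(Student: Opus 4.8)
\textbf{Proof plan for Proposition~\ref{elliptic-estimates_vega}.}
The strategy is a perturbation/fixed-point argument around the constant-coefficient operator $L_{\overline{\lambda}} u := \divv \D u + \overline{\lambda}\nabla\divv u$, which is assumed to be $W^{2,p}$-invertible with norm $C_{p,\overline{\lambda}}$. First I would rewrite the nonlinear equation $-\divv\Ss u = f$ using the chain-rule identity \eqref{eq:S_chainrule}, dividing by $2\mu(|\D u|^2)$, so that it takes the schematic form
\begin{align*}
L_{\overline{\lambda}} u = \frac{f}{2\mu(|\D u|^2)} - 2\frac{\mu'(|\D u|^2)}{\mu(|\D u|^2)}\Big(\sum_{i=1}^d \langle\partial_i \D u,\D u\rangle (\D u)_i\Big) + \Big(\overline{\lambda} - \frac{\lambda(\divv u)+\lambda'(\divv u)\divv u}{2\mu(|\D u|^2)}\Big)\nabla\divv u.
\end{align*}
Then I would define the map $T$ sending $v$ to the solution $u = L_{\overline{\lambda}}^{-1}(\text{RHS evaluated at } v)$ and show it is a contraction on a suitable ball of $W^{2,p}(\Omega)^d$ (intersected with mean-zero), using the three smallness hypotheses: the second term is controlled by $\delta<1$ since $|\langle\partial_i\D u,\D u\rangle(\D u)_i|\lesssim |\nabla\D u||\D u|^2$ and $|2\mu'(s)s|/\mu(s)$ is bounded by $\delta/C_{p,\overline{\lambda}}$; the third term is controlled by the hypothesis $C_{p,\overline{\lambda}}\sup_S|\overline{\lambda}-\tfrac{\lambda+\lambda'\tr S}{2\mu}|\leq 1$; and the first (source) term is handled by the growth bound $1/\mu(s)\leq c s^{\alpha/2}$, which yields $\|f/(2\mu(|\D u|^2))\|_{L^p}\lesssim \|f\|_{L^p}\,(1+\|\D u\|_{L^\infty}^{\alpha})$, where $\|\D u\|_{L^\infty}\lesssim \|u\|_{W^{2,p}}$ by Sobolev embedding when $p>d$ (or is simply absent when $\alpha=0$). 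Collecting these, a ball of radius $R$ is mapped to itself provided $R \gtrsim C\|f\|_{L^p}(1+R^{\alpha})$ and the Lipschitz constant stays below $1$; solving the former for $R$ gives the claimed exponent $1/(1-\alpha)$ in $\norm{\nabla\D u}_{L^p}\leq C\norm{f}_{L^p}^{1/(1-\alpha)}$.

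For the contraction estimate I would subtract the equations for $T v_1$ and $T v_2$, again invoke $L_{\overline{\lambda}}^{-1}$, and use that $s\mapsto 1/\mu(s)$, $s\mapsto \mu'(s)/\mu(s)$, and $S\mapsto (\lambda(\tr S)+\lambda'(\tr S)\tr S)/\mu(|S|^2)$ are locally Lipschitz (from $\mu\in C^1$, $\lambda\in C^1$, $\mu\geq\varepsilon_\mu^1>0$) together with the algebra property / Sobolev multiplication in $W^{2,p}$ for $p>d$; the smallness constants $\delta$ and $C_{p,\overline{\lambda}}\sup_S|\cdots|$ being strictly less than $1$ furnish the contraction factor, at the cost of shrinking $R$ if necessary. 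Uniqueness of the $W^{2,p}$ solution in the ball is then automatic; matching it with the given weak solution uses that weak solutions are unique by monotonicity (the ellipticity~\eqref{eq:ellipticity}), so the weak solution coincides with the fixed point and inherits $W^{2,p}$-regularity.

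Finally, the explicit bound \eqref{eq:cp_estimate} on $C_{p,\overline{\lambda}}$ is proved separately and more concretely: write $L_{\overline{\lambda}} u = f$ componentwise, note $\divv\D u = \tfrac12\Delta u + \tfrac12\nabla\divv u$, so the system is $\tfrac12\Delta u + (\tfrac12+\overline{\lambda})\nabla\divv u = f$; taking the divergence gives a scalar Poisson-type equation $(1+2\overline{\lambda})\Delta\divv u = 2\divv f$ (periodic, mean zero), whence $\nabla\divv u = \tfrac{2}{1+2\overline{\lambda}}\nabla\Delta^{-1}\divv f$. Then $\Delta u = 2f - (1+2\overline{\lambda})\nabla\divv u$, and one estimates $\nabla^2 u = \nabla^2\Delta^{-1}(\cdots)$ using the $L^p$-boundedness of the periodic Riesz-type operators $R_{ij} = \partial_i\partial_j\Delta^{-1}$. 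The numerical constants $d(p-1)$ and $d^2(p-1)$ come from the standard bound $\|R_{ij}g\|_{L^p}\leq (p-1)\|g\|_{L^p}$ (the sharp constant for the periodic Beurling-Ahlfors / second Riesz transform, or a crude version thereof) summed over the $d$ or $d^2$ indices; the $p=2$ case is cleaner since the Riesz transforms are contractions on $L^2$, giving the factor $(d^2+1)$.

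\textbf{Expected main obstacle.} The delicate point is bookkeeping the interaction between the source-term growth exponent $\alpha$ and the self-improving ball radius: one must verify that after Sobolev-embedding $\|\D u\|_{L^\infty}\lesssim \|u\|_{W^{2,p}}$ the inequality $R \geq C\|f\|_{L^p}(1+R^\alpha)$ is actually solvable with $R \sim \|f\|_{L^p}^{1/(1-\alpha)}$ uniformly, and simultaneously that the Lipschitz constant on that ball — which a priori also grows with $R$ through the local Lipschitz bounds on $1/\mu$ and $\mu'/\mu$ — does not exceed $\max\{\delta, C_{p,\overline{\lambda}}\sup_S|\cdots|\} < 1$. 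This forces a careful ordering: first fix $R$ from the source term, then check the perturbative terms are governed by the \emph{uniform} (not local) smallness constants $\delta$ and $C_{p,\overline{\lambda}}\sup_S|\cdots|$, which is exactly why the hypotheses are stated as suprema over all $s\geq 0$ and all $S\in\Sym(n,\RR)$ rather than as local conditions.
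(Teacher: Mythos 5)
Your core \textit{a priori} estimate coincides with the paper's: both rewrite the equation via \eqref{eq:S_chainrule} as $\divv \D u + \overline{\lambda}\nabla\divv u = -\tfrac{f}{2\mu} - (\tfrac{\lambda+\lambda'\divv u}{2\mu}-\overline{\lambda})\nabla\divv u - \tfrac{2\mu'}{\mu}(\ldots)$, invoke the $C_{p,\overline{\lambda}}$ bound, absorb the last two terms using the two uniform smallness conditions, and close the source term with $\tfrac1\mu\leq c\,s^{\alpha/2}$, Sobolev/Poincar\'e, and Young, yielding the exponent $1/(1-\alpha)$. The main divergence is your additional fixed-point layer. The paper stops at the \textit{a priori} bound, tacitly assuming $u$ already has enough regularity to apply $C_{p,\overline{\lambda}}$ to the rewritten equation; it never constructs the $W^{2,p}$ solution nor upgrades the given weak solution. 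Your contraction proposal is the more ambitious (and in principle more rigorous) route, but it does not close under the stated hypotheses: to make $T$ a contraction you invoke local Lipschitz continuity of $s\mapsto \mu'(s)/\mu(s)$ and of $S\mapsto (\lambda(\tr S)+\lambda'(\tr S)\tr S)/\mu(|S|^2)$, but $\mu,\lambda\in C^1$ only give continuity of $\mu',\lambda'$, not Lipschitz continuity. The difference $\tfrac{\mu'(|\D v_1|^2)}{\mu(|\D v_1|^2)}(\ldots) - \tfrac{\mu'(|\D v_2|^2)}{\mu(|\D v_2|^2)}(\ldots)$ cannot be controlled by $\|v_1-v_2\|_{W^{2,p}}$ without a modulus of continuity for $\mu'$ (effectively $\mu\in C^{1,1}$), and the uniform suprema $\delta$, $C_{p,\overline\lambda}\sup_S|\cdots|$ only control the coefficients themselves, not their increments. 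So the contraction step introduces a hypothesis the proposition does not grant; the paper avoids the issue (at the price of leaving the regularity upgrade implicit). Your remark that a fixed point would then coincide with the weak solution by monotonicity is a good observation and exactly what a full proof would need.

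On the explicit bound \eqref{eq:cp_estimate} your strategy is a concrete variant of the paper's: you take the divergence first and back-substitute, whereas the paper uses the operator identity $(\mathbb{I}+(1+2\overline{\lambda})(R_iR_j)_{ij})^{-1}=\mathbb{I}-\tfrac{1+2\overline{\lambda}}{2+2\overline{\lambda}}(R_iR_j)_{ij}$ that follows from $(R_iR_j)_{ij}$ being a projection. The two routes are equivalent, but note a small algebra slip: from $\divv(\tfrac12\Delta u + (\tfrac12+\overline{\lambda})\nabla\divv u)=\divv f$ you get $(2+2\overline{\lambda})\Delta\divv u = 2\divv f$, not $(1+2\overline{\lambda})\Delta\divv u=2\divv f$; the correct factor $\tfrac{1}{2+2\overline{\lambda}}$ then reproduces the paper's $\tfrac{1+2\overline{\lambda}}{2+2\overline{\lambda}}$ once you re-expand $\Delta u$.
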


\begin{example}\label{ex:ex1}
	In \cite{Veiga} Beir{\~a}o da Veiga discusses for $\delta\geq 0$ and for some $\gamma\in (1,2]$ examples with a ($\delta$-$\gamma$)-structure of the type
	\begin{align*}
	\mu_{\delta,\gamma}(s)= (\delta + s)^{(\gamma-2)/2} \quad \hbox{and}\quad\lambda\equiv 0, \quad \hbox{i.e.,}\quad 	\mu_{\delta,\gamma}(|\D(u)|^2)= (\delta + |\D(u)|^2)^{(\gamma-2)/2}.
	\end{align*}
	The existence of a unique weak solution  $u\in W^{1,\gamma}(\Omega)$ follows from the classical theory of monotone operators, cf. e.g. \cite{Zeidler_II}, as discussed in \cite[Section 2]{Veiga}.
To find admissible $\gamma$ to apply Proposition~\ref{prop:elliptic-estimates} one estimates with $\overline {\lambda}=0$
	\begin{align*}
	\sup_{s\geq 0}\frac{|2\mu_{\delta,\gamma}'(s)s|}{\mu_{\delta,\gamma}(s)}\leq|\gamma-2|
<
 \begin{cases}
 \frac{1}{d(p-1)(d+(d^2/2)(p-1)+3/2)}, & p\in (2,\infty), \\
 \frac{1}{(3/2)(d^2+1)} & p=2.
 \end{cases}
	\end{align*}
In particular for $d=3$ and the relevant case $p=6$ in Theorem ~\ref{Main-Theorem}, we can quantify Beir{\~a}o da Veiga's assumption for the case of the torus as
\begin{align*}
	|\gamma-2|< \varepsilon:=1/114.
\end{align*}
This carries over also to $\gamma>2$, that is $\gamma\in (2-\varepsilon,2+\varepsilon)$.
The ellipticity conditions~\eqref{eq:ellipticity2}--\eqref{eq:ellipticity} however require $\gamma\geq 2$ and $\delta>0$. The case $p=2$ follows from Proposition~\ref{rem:H2estimate} below. Hence, Assumption~\ref{assumption-elliptic-reg} holds for $p\in \{2,6\}$  if $\mu_{\delta,\gamma}$ is as above with $\delta>0$ and $\gamma\in [2,2+\varepsilon)$.
\end{example}

\begin{proof}[Proof of Proposition~\ref{elliptic-estimates_vega}]
	Let $u$ be a solution of $-\divv \Ss u =f$, then by \eqref{eq:S_chainrule}
	\begin{align*}
	\divv \D (u)+\overline{\lambda}\nabla\divv u = -\tfrac{f}{2\mu} -\left( \tfrac{\lambda+\lambda'\divv u}{2\mu}-\overline{\lambda}\right)\nabla\divv u
	-\tfrac{2\mu'}{\mu}\big(\sum_{i=1}^d \langle \partial_i \D (u),\D (u)\rangle(\D (u))_i\big),
	\end{align*}
	where we wrote for brevity $\mu=\mu(|\D(u)|^2)$, $\mu'=\mu'(|\D(u)|^2)$,   $\lambda=\lambda(\divv u)$, and $\lambda'=\lambda'(\divv u)$. 
Hence, by the requirements on $C_{p,\overline{\lambda}}$ and the assumptions, one estimates
\begin{align*}
\norm{\nabla \D (u)}_{L^p}+\norm{\nabla \divv u}_{L^p}\leq C_{p,\overline{\lambda}}
\norm{f/(2\mu)}_{L^p}  + \norm{\nabla \divv u}_{L^p}+
\delta\norm{\nabla \D (u)}_{L^p}.
\end{align*}
Now, for $p>d$ using Sobolev embeddings, Poincar\'e's inequality for  $\int_{\Omega}\D (u)=0$, and Young's inequality	
	\begin{multline*}
	\norm{\tfrac{f}{\mu}}_{L^p}\leq 
	\norm{\tfrac{1}{\mu(|Du|^2)}}_{L^\infty}\norm{f}_{L^p} \leq C\norm{\D (u)}_{L^\infty}^{\alpha}\norm{f}_{L^p} 
	\leq C\norm{\nabla\D (u)}_{L^p}^{\alpha}\norm{f}_{L^p} 
	\leq
	 (1-\delta)\norm{\nabla\D (u)}_{L^{p}}
	+  2C\norm{f}^{1/(1-\alpha)}_{L^p},
	\end{multline*}
	where $C$ depends on $p,\alpha,\delta$. For $\alpha=0$ one has directly $\norm{\mu(|\D (u)|^2)^{-1}}_{L^\infty}\leq c$.

Estimates on $C_{p,\overline{\lambda}}$ can be related to estimates on the norm of the Riesz transform
$R=\nabla (-\Delta)^{-1/2}$ 
with components $R_j=\partial_{x_j}(-\Delta)^{-1/2}$ on $L_0^p(\Omega)^d$ for $p\in (1,\infty)$. 
Computing the norm of the Riesz transform is a challenging and open problem for $p\neq 2$. Fortunately, there is a number of estimates 
on compact Lie groups such as the torus,  see e.g. \cite{Baudoin, Arcozzi1, Arcozzi2} and the discussions therein.

	Let $u$ be a solution to $-\divv \D(u) -\overline{\lambda}\nabla \divv u=f$ for $f\in L_0^p(\Omega)^d$. Then,
	\begin{align*}
	 -\divv \D(u) -\overline{\lambda}\nabla \divv u = -\tfrac{1}{2}\Delta u -\tfrac{1+2\overline{\lambda}}{2} \nabla \divv u 
	 = -\tfrac{1}{2}\Delta(\mathbb{I} + (1+2\overline{\lambda})(R_{i}R_j)_{ij}) u.	\end{align*}  
	Here $(R_iR_j)_{ij}$ denotes the denotes the block operator matrix in $L_0^p(\Omega)^d$ with entries $R_iR_j$ for $1\leq i,j\leq d$. In vector notation one has $(R_iR_j)_{ij} = \nabla\nabla^T \Delta^{-1}$.
	In particular, using that $\nabla^T\nabla \Delta^{-1}=\mathbb{I}$ one concludes that $(R_iR_j)_{ij}^2=(R_iR_j)_{ij}$,
	and therefore	$$(\mathbb{I} + (1+2\overline{\lambda})(R_iR_j)_{ij})^{-1}= 	(\mathbb{I} - (R_iR_j)_{ij} + \tfrac{1}{(2+2\overline{\lambda})}(R_iR_j)_{ij}).$$
Hence,
\begin{align*}
 \partial_{x_k}\partial_{x_l}u = 	\partial_{x_k}\partial_{x_l}2(-\Delta)^{-1}(\mathbb{I} - \tfrac{1+2\overline{\lambda}}{2+2\overline{\lambda}}(R_iR_j)_{ij}) f = 2R_kR_l(\mathbb{I} - \tfrac{1+2\overline{\lambda}}{2+2\overline{\lambda}}(R_iR_j)_{ij}) f.
\end{align*}
Moreover, one has the  pointwise estimate $\abs{\nabla \D (u)}\leq \tfrac{1}{2}(\abs{\nabla\nabla u} +  \abs{\nabla(\nabla u)^T})= \abs{\nabla^2 u}$, and therefore
\begin{align}\label{eq:Riesz_Cp}
\begin{split}
\norm{\nabla \D (u)}_{L^p} \leq \norm{\nabla^2 u}_{L^{p}} &\leq C^1_{p,\overline{\lambda}} \norm{f}_{L^p},\quad  \hbox{where}\quad C^1_{p,\overline{\lambda}}\leq 2\sum_{1\leq k,l\leq d}\norm{R_kR_{l}}(1+\tfrac{1+2\overline{\lambda}}{2+2\overline{\lambda}}\norm{(R_{i}R_j)_{ij}}), \\
\norm{\nabla \divv u}_{L^p}   &\leq C^2_{p,\overline{\lambda}} \norm{f}_{L^p},\quad  \hbox{where}\quad C^2_{p,\overline{\lambda}}\leq 2(\norm{(R_iR_{j})_{ij}}+\tfrac{1+2\overline{\lambda}}{2+2\overline{\lambda}}\norm{(R_{i}R_j)_{ij}}).
\end{split}
\end{align}
Next, we employ 
estimates for the second order Riesz transforms $(R_iR_j)_{ij}$, that is  $\norm{(R_iR_j)_{ij}}\leq d(p-1)$ and $\norm{R_kR_l}\leq (p-1)$ for $p\in (2,\infty)$ cf. \cite[Theorem 4.1 (i)]{Baudoin}  and also \cite[Theorem 4]{Arcozzi2} where in the coefficient matrix all entries are equal to $1$ or equal to $\delta_{kl}$, respectively. Hence we obtain \eqref{eq:cp_estimate} for $p\in (2,\infty)$ from \eqref{eq:Riesz_Cp}
 and
 for $p=2$ this simplifies since  
$\norm{(\mathbb{I} - \tfrac{1+2\overline{\lambda}}{2+2\overline{\lambda}}(R_iR_j)_{ij})}=\max\{1,1-\tfrac{1+2\overline{\lambda}}{2+2\overline{\lambda}}\}=1$ and $\norm{R_kR_l}= 1$. 
\end{proof}

In the one-dimensional space, system \eqref{eq:ellip-prob} reduces to 
\begin{equation}\label{eq-ellip-1d}
-\del_x \big(\mu(|\del_x u|^2) \del_x u \big) =f.
\end{equation}
\begin{proposition}[Second order $L^p$-estimate in the 1-dimensional case]\label{prop:elliptic-estimates}
	Let $d=1$, $p\in (1,\infty)$, and $\mu, \varepsilon_\mu$ be as in \eqref{eq:ellipticity2}--\eqref{eq:ellipticity}.
	Then for $f\in L^{p}(\Omega)$ a solution $u$ to \eqref{eq:ellip-prob}  belongs to $W^{2,p}(\Omega)$, and we have
	\begin{equation}\label{1d-estimate}
	\intO |\del_x^2 u|^p \, \dx \leq  \varepsilon_\mu^{1-p} \intO |f|^p \, \dx.
	\end{equation}
\end{proposition}
\begin{proof}
	Multiplying both side of \eqref{eq-ellip-1d} by $-|\del_x^2 u|^{p-2} \del_x^2 u$ and integrate over $\Omega$. The left-hand side term gives us
	\begin{align*}
	\intO \del_x \big(\mu(|\del_x u|^2) \del_x u\big) |\del_x^2 u|^{p-2} \del_x^2 u &= \intO \Big( \mu(|\del_x u|^2) \del_x^2 u + 2 |\del_x u|^2 \mu^\prime(|\del_x u|^2) \del_x^2 u\Big) |\del_x^2 u|^{p-2} \del_x^2 u \\
	&= \intO \big( \mu(|\del_x u|^2) + 2 |\del_x u|^2 \mu^\prime(|\del_x u|^2) \big) |\del_x^2 u|^p \, \dx\geq \varepsilon_\mu\intO |\del_x^2 u|^p \, dx.
	\end{align*}
	We estimate the right-hand-side of \eqref{eq-ellip-1d} using H\"older's and Young's inequalities to obtain 
	\begin{align*}
	\intO f \,|\del_x^2 u|^{p-2}\del_x^2 u\,dx
	&\leq \Big(\intO |f|^p\,dx\Big)^{1/p}\, \Big(\intO|\del_x^2 u|^p\,dx\Big)^{(p-1)/p}
	\leq \dfrac{1}{(\varepsilon_\mu)^{p-1} p} \intO |f|^p\,dx +\dfrac{\varepsilon_\mu(p-1)}{p} \intO |\del_x^2 u|^p\,dx.
	\end{align*}
	Therefore, from the above estimates we deduce \eqref{1d-estimate}.
\end{proof}

\begin{proposition}[Second order $L^2$-estimate  in the general case] \label{rem:H2estimate}
 Let $\mu, \lambda$ and $\varepsilon_\mu, \varepsilon_\lambda$ be as defined in \eqref{eq:ellipticity2}--\eqref{eq:ellipticity}.
 If $u\in H^1(\Omega)$ is a weak solution  to system \eqref{eq:ellip-prob}, then  $u\in H^2(\Omega)$ and the following estimate holds
  \begin{align*}
\varepsilon_\mu    \norm{\nabla \D(u)}_{L^2}^2 + 2\varepsilon_\lambda \norm{\nabla \divv u}_{L^2}^2 \leq \tfrac{1}{\varepsilon_\mu}\norm{f}_{L^2}^2.
  \end{align*} 
\end{proposition}
  \begin{proof}
	Indeed, taking the $L^2$-scalar product of equations \eqref{eq:ellip-prob} with $-\partial_i^2 u$ for $1\leq i\leq d$. After performing two integrations by parts -- where the boundary terms vanish due to periodicity -- and applying the chain rule we obtain
		\begin{multline*}
	\intO \langle \divv\big(
	2\mu(|\D(u)|^2)\D(u)\big), \partial_i^2 u\, \rangle_{\RR^d}\dx 
	= 	\intO \langle \partial_i\big(
	2\mu(|\D(u)|^2)\D(u)\big), \nabla\partial_i u\, \rangle_{\RR^{d^2}}\dx \\
	= 	2\intO \langle 
	\mu(|\D(u)|^2)\D(\partial_i u) + 2\mu'(|\D(u)|^2)\langle \D(\partial_i u)\D(u)\rangle_{\RR^{d^2}}\D(u), \D(\partial_i u) \rangle_{\RR^{d^2}}\dx \\
	=	2\intO 
	\mu(|\D(u)|^2)|\D(\partial_i u)|^2 + 2\mu'(|\D(u)|^2)|\D(u)|^2\cdot |\langle\D(\partial_i u),\tfrac{\D(u)}{|\D(u)|}\rangle_{\RR^{d^2}}|^2 \dx.
	\end{multline*}
Here, we have also used that $\langle \tfrac{1}{2}(A+A^T),B \rangle_{\RR^{d^2}}=\langle \tfrac{1}{2}(A+A^T),\tfrac{1}{2}(B+B^T) \rangle_{\RR^{d^2}}$ for matrices $A,B$ since the symmetric matrices form a subspace of $\RR^{d^2}$  the orthogonal projection onto which is given by the symmetric part, and also that $\partial_i\D=\D\partial_i$.
	Then one can estimate using \eqref{eq:ellipticity2}--\eqref{eq:ellipticity}
	\begin{multline*}
	\mu(|\D(u)|^2)|\D(\partial_i u)|^2 + 2\mu'(|\D(u)|^2)|\D(u)|^2\cdot |\langle\D(\partial_i u),\tfrac{\D(u)}{|\D(u)|}\rangle_{\RR^{d^2}}|^2  \\
\geq	\begin{cases}
	\mu(|\D(u)|^2)|\D(\partial_i u)|^2, & \hbox{if } \mu'(|\D(u)|^2) \geq 0, \\
		\big(\mu(|\D(u)|^2)| + 2\mu'(|\D(u)|^2)|\D(u)|^2\big) |\D(\partial_i u)|^2& \hbox{if } \mu'(|\D(u)|^2) <0,
	\end{cases}
	\geq \varepsilon_\mu |\D(\partial_i u)|^2.
	\end{multline*}

	Similarly, after two integrations by parts and applying the chain rule, one has using that $\langle \mathbb{I}, \nabla \partial_i u\rangle_{\RR^{d^2}}=\divv \partial_i u$
			\begin{align*}
\intO \langle\divv\big(\lambda(\divv u)\divv u \, \mathbb{I}
	\big), \partial_i^2 u\rangle_{\RR^d} \dx &= \intO \langle\partial_i\big(\lambda(\divv u)\divv u \,\, \mathbb{I} 
	\big),  \nabla \partial_i u\rangle_{\CC^{d^2}} \dx \\
&= \intO \langle \lambda(\divv u)\divv \partial_i u \, \mathbb{I} + \lambda'(\divv u)(\divv \partial_i u) \divv u\, \mathbb{I} 
,  \nabla \partial_i u\rangle_{\RR^{d^2}} \dx \\
&=
\intO  \big(\lambda(\divv u) + \lambda'(\divv u)(\divv u)\big) |\divv \partial_i u|^2 \dx\\
&\geq \varepsilon_\lambda \intO |\divv \partial_i u|^2 \dx.
	\end{align*}
Using these inequalities eventually leads by Young's inequality and since $|\Delta u| \leq |\nabla \D(u)|$ to
\begin{align*}
\varepsilon_\mu \norm{\nabla \D(u)}^2_{L^2} + \varepsilon_\lambda \norm{\nabla \divv u}_{L^2}^2  \leq  \langle-\divv \Ss u, -\Delta u\rangle_{\RR^d} = \langle f, -\Delta u\rangle_{\RR^d} \leq \tfrac{1}{2\varepsilon_\mu}\norm{f}^2_{L^2} + \tfrac{\varepsilon_\mu}{2}\norm{\nabla \D(u)}^2_{L^2}.
\end{align*}
Thus, 
$u\in H^2(\Omega)$ since the weak solution is already bounded in $H^1(\Omega)$. In the above estimates, the absence of a boundary for $\Omega=\mathbb{T}^d$ has been crucial.   
\end{proof}

\section{Proof of Theorem~\ref{Main-Theorem}}\label{sec:proof}

The idea of the proof adapts 
the overall strategy developed in \cite{choe_kim1} for compressible Newtonian fluids to the non-Newtonian setting. 
Firstly, we shall construct 
approximate solutions $(\rho^k, u^k)$ and establish uniform estimates on these, where in contrast to \cite{choe_kim1} we consider a non-linear approximation. Secondly, we show that the approximate solutions $(\rho^k, u^k)$ for regularized initial data with $\rho_0^{\delta}=\rho_0+\delta$ converges to a solution to system \ref{depart} 
in the limit $k\to \infty$ and $\delta\to 0$. Finally, we discuss the blowup criterion. 
Some details are skipped in this section due to the similarity of our proof with the one of 
\cite[Theorem 7]{choe_kim1}. In \cite{choe_kim1} even unbounded domains are included since the estimates are compatible with cut-offs.

\subsection{Construction  of approximate solutions}\label{subsec:approx_sol}
 To construct approximate solutions, we firstly regularize the initial data.  Let $u_0$ and $\rho_0$ be given as in Theorem~\ref{Main-Theorem}.
 For  $\delta>0$, set $\rho_{0}^{\delta}=\rho_{0}+\delta$, and let $u_{0}^{\delta}$ be the solution to the following non-linear elliptic problem 
\begin{equation}\label{initial-elliptic}   
-\divv \Ss u_{0}^{\delta}
= (\rho_{0}^{\delta})^{1/2}g - \nabla p_{0}^{\delta}, \quad \hbox{where}\quad p_{0}^{\delta}:=p(\rho_{0}^{\delta}).
\end{equation}
Here, due to the compatibility condition~\eqref{comp-cond}  $g\in L^{2}(\Omega)$ and by construction $\rho_0^\delta\in W^{1,q}(\Omega)$. Hence, 
by Assumption \ref{assumption-elliptic-reg} (see also Proposition~\ref{rem:H2estimate}), the solution $u_0^\delta$  of  \eqref{initial-elliptic} is unique with $\int_{\Omega} u_0\, dx = 0$ and belongs to $H^2(\Omega)$ uniformly with respect to $\delta$. Hence, the regularized initial data $\rho_0^\delta$ and $u_0^\delta$ satisfy also the assumptions of  Theorem~\ref{Main-Theorem}.

Secondly, we  construct iteratively approximate solutions to system \eqref{depart}. We start by setting $u^0=0$ and 
for $k\geq 1$, let $\rho^k, u^k$ be the unique smooth solutions to 
the following quasi-linear problem
\begin{align}\label{iterating-scheme}
\begin{split}
\rho_{t}^{k}+ u^{k-1}\cdot\nabla \rho^{k} + \rho^{k}\mathrm{div}\, u^{k-1} &= 0,\\
\rho^{k}u^{k}_{t} + \rho^{k}u^{k-1}\cdot \nabla u^{k} -\divv \Ss u^{k}+ \nabla p^{k}  &= \rho^{k} f, \\
(\rho^k, u^k)\vert_{t=0} &= (\rho_0^\delta, u_0^\delta).
\end{split}
\end{align}
Here, we consider smooth approximations of the data and including smooth approximations of $\lambda,\mu$ satisfying uniformly \eqref{eq:ellipticity}. Then
each $k\geq 1$ this problem admits a unique smooth solution on a maximal existence time $T_k>0$ according to the classical existence theorems, see for instance \cite[Chapter 5]{Nec-Malek-book} and \cite[Chapter 6]{Ladyzenskaja_book}, and also \cite{sarka, Pruss2007}.  For simplicity we omit the $\delta$-dependence in the notation of $\rho^k, u^k$.

\subsection{Uniform estimates of approximate solutions}\label{sub4.1}
Next, we shall establish \textit{a priori} estimates  in higher norms on the approximate solutions constructed above.

\begin{lemma}\label{lemma-estimates}
Let $(\rho^k, u^k)$ be the solution to system \eqref{iterating-scheme} with initial data $\rho_0^\delta, u_0^\delta$ for $\delta>0$ where $\rho_0, u_0$ and $f$ are as in Theorem~\ref{Main-Theorem}. Then there is a $0<T^\ast$ with $T^\ast\leq T_k$ for all $k\geq 1$ such that
\begin{eqnarray}\label{eq3.24}
\begin{split}
\sup_{0\leq t \leq T^\ast}\Big[\Vert\rho^{k}(t) \Vert_{W^{1,q_0}(\Omega)}\,+\, \Vert\rho^{k}_{t}(t) \Vert_{L^{q_0}(\Omega)} \,+\, \Vert u^{k}(t) \Vert_{H^{2}(\Omega)}\,+\, \Vert \sqrt{\rho^{k}(t)}u^{k}_{t}(t) \Vert_{L^{2}(\Omega)}\Big] \,\, \\
+\int_{0}^{T^\ast}\Big(\Vert u^{k}(t) \Vert^{2}_{W^{2,q_0}(\Omega)}\,+\, \Vert u^{k}_{t}(t) \Vert^{2}_{H^{1}(\Omega)}\Big) \dt 
\leq C\exp\big(C \exp(C \mathcal{C}_0)\big)
\end{split}
\end{eqnarray}
for all $k\geq 1$, where  $C$ is a generic numerical constant, which does not depend on $k$ and $\delta$ and
\[
\mathcal{C}_0 = \mathcal{C}(\rho_0,u_0) = \intO \rho_0^{-1} \, \vert \divv \Ss u_0 + \nabla p(\rho_0) \vert^2\, \dx =\norm{g}^2_{L^2}.
\]
\end{lemma}

This lemma is the non-Newtonian analogue of  \cite[Eq.  (3.24)]{choe_kim1}. Since the  proofs have many similarities,
we will skip many details in the proof. We shall pay attention to the viscous stress tensor which constitutes the main difference compared to the problem studied in \cite{choe_kim1}.
\begin{proof}[Proof of Lemma \ref{lemma-estimates}] 
The proof of estimate \eqref{eq3.24},  is based on several steps.
Consider the following auxiliary functions as in \cite[Subsec. 3.1]{choe_kim1} for $K>0$ being a fixed integer  
	\begin{equation*}
	\phi_{K}(t) \,=\, \max_{1\leq k\leq K} \sup_{0\leq s \leq t} \Big(1 \,+\, \Vert \rho^{k}(s)\Vert_{W^{1,q_0}}  \,+\, \Vert u^{k}(s)\Vert_{H^1} \Big).
	\end{equation*}

\medskip

{\it Step 1. Estimate for $\Vert u^{k}(s) \Vert_{H^1}$.} We take the scalar product of \eqref{iterating-scheme}$_2$ by $u_t^k$ and integrate over $\Omega$. Then 
\begin{align}\label{3.8}
\begin{split}
\frac{1}{2}\int_{\Omega}\rho^k &|u_t^k|^{2}\,\dx+\, \int_{\Omega} 2\mu \big(\vert \mathrm{D}\, (u^{k}))\vert^{2}\big)\mathrm{D}\, (u^{k}):\nabla u_t^{k}\,\mathrm{d}\,x \,+\, \int_{\Omega} \lambda \big(\mathrm{div}\, u^{k}\big)\mathrm{div}\, u^{k}\mathrm{div}\, u^{k}_{t}\,\mathrm{d}\,x\\
&=\intO \big(\rho^k f-\rho^k u^{k-1}\cdot\nabla u^k\big)\cdot u_t^k\,\dx-\intO \nabla p^k\cdot u_{t}^{k}\,\dx\\
&=\intO \big(\rho^k f-\rho^k u^{k-1}\cdot\nabla u^k\big)\cdot u_t^k  - p_t^k \, \divv u^k \,\dx + \Dt \intO p^k \divv u^k \, \dx.
\end{split}
\end{align} 
The main difference to \cite[Eq. (3.8)]{choe_kim1} are the second and third terms on the left-hand-side.   
These can be rewritten as follows using integration by parts and the fundamental theorem of calculus 
\begin{align*}
\, \int_{\Omega} & 2\mu \big(\vert \mathrm{D}\, (u^{k})\vert^{2}\big)\mathrm{D}\, (u^{k}): \nabla u_t^{k}\,\dx \,+\, \int_{\Omega} \lambda \big(\mathrm{div}\, u^{k}\big)\mathrm{div}\, u^{k}\mathrm{div}\, u^{k}_{t}\,\dx\\
& =
\int_{\Omega}\mu(|\D(u^{k})|^2)\dfrac{d}{dt}|\D(u^k)|^2\,dx+ \dfrac{1}{2}\intO \lambda(\divv u^{k})\dfrac{d}{dt}|\divv u^k|^2\,\dx\\
&= \, \int_{\Omega}\Big(\dfrac{d}{d t}\int_{0}^{\vert \mathrm{D}\, (u^k)\vert^{2}} \mu(s)\,\mathrm{d}\,s\Big)\,\mathrm{d}\,x + \dfrac{1}{2} \, \int_{\Omega}\Big(\dfrac{d}{d t}\int_{0}^{\vert \mathrm{\divv}\, u^k\vert^{2}} \lambda(s)\,\mathrm{d}\,s\Big)\,\dx \\
&=
\int_{\Omega}\Big(\lim_{h\to 0}\dfrac{1}{h}\int_{\vert \mathrm{D}\, (u^k)\vert^{2}(t)}^{\vert \mathrm{D}\, (u^k)\vert^{2}(t+h)} \mu(s)\,\mathrm{d}\,s\Big)\,\mathrm{d}\,x +  \, \lim_{h\to 0}\dfrac{1}{2h}\int_{\Omega}\Big(\int_{\vert \mathrm{\divv}\, u^k\vert^{2}(t)}^{\vert \mathrm{\divv}\, u^k\vert^{2}(t+h)} \lambda(s)\,\mathrm{d}\,s\Big)\,\dx \\
&\geq \varepsilon_{\mu}  \frac{d}{d t}\int_{\Omega}\vert \mathrm{D}\, (u^k)\vert^{2}\,\mathrm{d}\,x + 
 \dfrac{\varepsilon_{\lambda}}{2} \frac{d}{d t}\int_{\Omega}\vert \mathrm{\divv }\, u^k\vert^{2}\,\dx
 = \dfrac{\varepsilon_{\mu}}{2}  \frac{d}{d t}\int_{\Omega}\vert \nabla u^k\vert^{2}\,\mathrm{d}\,x + 
 \dfrac{\varepsilon_{\mu}+\varepsilon_{\lambda}}{2} \frac{d}{d t}\int_{\Omega}\vert \mathrm{\divv }\, u^k\vert^{2}\,\dx,
\end{align*}
where  the last estimate uses the ellipticity condition~\eqref{eq:ellipticity}, and by dominated convergence the integral over $\Omega$ and the time derivative interchange, and in the last inequality several integrations by parts have been applied. 
 So,  \cite[Eq. (3.8)]{choe_kim1} is obtained with $\mu$ and $\lambda$ in \cite[Eq. (3.8)]{choe_kim1} replaced by $\varepsilon_\mu$ and $\varepsilon_\lambda$ here, respectively.  
Consequently, 
we deduce using \eqref{eq:ellipticity} analogously to \cite[Eq. (3.11)]{choe_kim1} for $K\geq k$ that
\begin{equation}\label{3.11}
\int_0^t \Vert \sqrt{\rho^k} u_t^k \Vert_{L^{2}}^2 \, \ds +  \Vert \nabla u^k(t) \Vert_{L^2}^2 \leq C + \int_0^t M(\phi_K) (1 + \Vert \nabla u^k \Vert_{H^1}) \, \ds,
\end{equation}
where here and in the following $M = M(\cdot)\colon [0, \infty) \rightarrow [0, \infty)$ denotes a certain increasing continuous function  with $M(0)=0$ independent of $\delta$, and $C>0$ denotes some universal constant.

To estimate the higher order term $\|\nabla u^k(s) \|_{H^1}$ in \eqref{3.11}, we shall use elliptic regularity. Indeed, remember that by \eqref{iterating-scheme} $u^k$ is the solution of the following non-linear elliptic system
\begin{align}\label{eq:elliptic_estimate}
-\divv \Ss  u^k=F^k, \quad \hbox{where} \quad  F^k:=\rho^k f -\rho^k u_t^k -\rho^k u^{k-1}\cdot \nabla u^k -\nabla p^k.
\end{align}
 Therefore, due to Assumption~\ref{assumption-elliptic-reg} for $p=2$ -- or under slightly different assumptions by Proposition~\ref{rem:H2estimate} --  we obtain  
\begin{align*}
\|u\|_{H^{2}}&\leq  C(\|\rho^k u_t^k\|_{L^{2}(\Omega)}+\|\rho^k u^{k-1}\cdot\nabla u^k\|_{L^{2}}+\|\rho^k f\|_{L^{2}}+\|\nabla p^k\|_{L^{2}})\\
&\leq C(\|\rho^k \|_{L^{\infty}}^{1/2}\|\sqrt{\rho^k}u_t^k\|_{L^{2}}+\|\rho^k\|_{L^{\infty}}\|u^{k-1}\|_{L^{6}}\|\nabla u^k\|_{L^{3}} 
+\,\|\rho^k\|_{L^{\infty}}\|f\|_{L^{2}(\Omega)}+\|\nabla p^k\|_{L^{2}})\\
&\leq 2M(\phi_k) \big( 1 + \Vert \sqrt{\rho^k} u_t^k \Vert_{L^2}\big) + C \Vert \rho^k \Vert_{L^\infty} \Vert \nabla u^{k-1} \Vert_{L^2} \Vert \nabla u^k \Vert_{L^3}\\
&\leq 2M(\phi_K) \big( 1  + \Vert \sqrt{\rho^k} u_t^k \Vert_{L^2}\big) + \dfrac{1}{2} \Vert \nabla u^k \Vert_{H^1},
\end{align*}
and thus (with slight modifications) as in \cite[Eq. (3.12)]{choe_kim1}
\begin{equation}\label{3.12}
\Vert u^k \Vert_{H^2} \leq M(\phi_K) \big(1+ \Vert \sqrt{\rho^k} u_t^k \Vert_{L^2}\big).
\end{equation}
Substituting this into \eqref{3.11} and using Young's inequality, we conclude that  as in \cite[Eq. (3.13)]{choe_kim1}
\begin{align}\label{3.13}
\int_0^t |\sqrt{\rho^k} u_t^k |_{L^2}^2\,ds  + \| u^k(t)\|_{H^1} \leq C + \int_0^t M(\phi_K(s))\, ds \quad \hbox{for all } 1\leq k \leq K.
\end{align}

\medskip

{\it Step 2. Estimate for $\Vert\sqrt{\rho^{k}}u^{k}_{t} \Vert_{L^2}$.} We start by differentiating the momentum equation in \eqref{iterating-scheme} with respect to $t$ to obtain
\begin{multline}\label{diff-momen-time}
\begin{split}
\rho^k u_{tt}^k + \rho^k u^{k-1} \cdot \nabla  u_t^k - 2\divv (\mu(|\D (u^k)|^2) \D (u_t^k)) - 4\divv(\mu^{\prime}(|\D (u^k)|^2) (\D (u^k) : \D (u_t^k)) \, \D (u^k))\\ - \nabla (\lambda(\divv u^k) \divv u^k_t)
 - \nabla (\lambda (\divv u^k) \divv u_t^k \divv u^k) + \nabla p_t^k 
\\  = \rho^k f_t + \rho_t^k (f- u_t^k - u^{k-1}\cdot \nabla u^k )- \rho^k u_t^{k-1} \cdot \nabla u^k.   
\end{split}  
\end{multline}
We take the scalar product of the above equation \eqref{diff-momen-time} by $u_t^k$ and we integrate over $\Omega$. 
The third and the forth term can be estimated as follows  using \eqref{eq:ellipticity}
\begin{align*}
-2\intO &\divv (\mu(|\D (u^k)|^2) \D (u_t^k)) \cdot u_t^k \, \dx - 4\intO \divv(\mu^{\prime}(|\D (u^k)|^2) (\D (u^k) : \D (u_t^k)) \, \D (u^k)) \cdot u_t^k \,  \dx \\
&=2\intO \mu( |\D (u^k)|^2)\; \vert \D (u_t^k) \vert^2 \, \dx+ 4\intO  \mu^{\prime}  (|\D (u^k)|^2)\; \vert \D (u^k) : \D (u_t^k) \vert^2  \,\dx\\
&\geq	\begin{cases}
	2\displaystyle{\intO} \mu( |\D (u^k)|^2)\; \vert \D (u_t^k) \vert^2 \, \dx & \hbox{if } \mu'(|\D(u^k)|^2) \geq 0, \vspace*{0.3cm}\\
	2\displaystyle{\intO} \big(\mu( |\D (u^k)|^2) + 2|\D (u^k)|^2 \, \,  \mu^\prime( |\D (u^k)|^2)\big)  \vert \D (u_t^k) \vert^2  \, \dx	& \hbox{if } \mu'(|\D(u^k)|^2) <0,
	\end{cases}\\
	& \geq 2\varepsilon_{\mu}\intO \vert \D (u_t^k) \vert^2 \, \dx 
	=  \varepsilon_{\mu}\intO \vert \nabla u_t^k \vert^2  +\vert \divv u_t^k \vert^2 \dx.
\end{align*}
The fifth and sixth terms give by \eqref{eq:ellipticity} rise to
\[
 \int_{\Omega} (\lambda(\mathrm{div}\,u^{k})+ \divv u^k \lambda^\prime(\divv u^k))\big\vert \mathrm{div}\,u_t^{k}\big\vert^{2}\,\mathrm{d}\, x  
 \geq  \varepsilon_\lambda \int_{\Omega}\big\vert \mathrm{div}\,u_t^{k}\big\vert^{2}\dx.
\]
%
Therefore, using the linearised continuity equation we deduce
\begin{multline*}
\dfrac{1}{2} \Dt \intO \rho^k |u_t^k|^2 \, \dx+ \varepsilon_{\mu} \intO \vert \nabla u_t^k \vert^2 \, \dx + (\varepsilon_{\lambda}+\varepsilon_\mu)  \intO |\divv u_t^k|^2 \, \dx\\
\leq  \intO p_t^k \divv u_t^k \, \dx+ \intO \big( \divv(\rho^k u^{k-1}) (u_t^k + u^{k-1} \cdot\nabla u^k - f ) - \rho^k u_t^{k-1} \cdot \nabla u^k + \rho^k f_t\big) \cdot u_t^k \, \dx.
\end{multline*}
Now, we shall estimate all the terms appearing on the right-hand-side. To this end, we use again the linearised continuity equation in \eqref{iterating-scheme} to write similarly to \cite[Estimate (3.14)]{choe_kim1}
\begin{align*}
&\dfrac{1}{2} \Dt \intO \rho^k |u_t^k|^2 \, \dx+ \varepsilon_{\mu}\intO |\nabla u_t^k|^2\, \dx +(\varepsilon_{\lambda} +\varepsilon_\mu)\intO |\divv u_t^k|^2\, \dx\\
&\quad \leq \intO \Big[ 2 |\rho^k| |u^{k-1}| |u_t^k | |\nabla u_t^k| + |\rho^k| |u^{k-1}| |\nabla u^{k-1}| |\nabla u^k| |u_t^k| \\
&\qquad \qquad+ |\rho^k| |u^{k-1}|^2 |u_t|^k |\nabla^2 u^k| + |\rho^k| |u^{k-1}|^2 |\nabla u^k| |\nabla u_t^k|\\
&\qquad\qquad +|\rho^k| |u_t^{k-1}| |u_t^k| |\nabla u^k| +|\nabla p^k| |u^{k-1}| |\divv u_t^{k}|\\
&\qquad \qquad+ |p^\prime(\rho^k)| |\rho^k| |\divv u^{k-1}| |\divv u_t^k| + |\nabla \rho^k| |u^{k-1}|  |f| |u_t^k| \\
&\qquad \qquad +|\rho^k| |\nabla u^{k-1}| |f| |u_t^k| + |\rho^k| |u^{k-1}| |f| |\nabla u_t^k| + |\rho^k| |u_t^k| |f_t|\Big]\, \dx = \underset{j=1}{\overset{11}{\sum}} I_j.
\end{align*}
Following the same lines as in \cite{choe_kim1}, 
we estimate each integral $I_j$ making an extensive use of Sobolev and H\"older inequalities. We deduce analogously to \cite[Eq. (3.19)]{choe_kim1} -- where we have to replace $\mu$ and $\lambda$ by $\varepsilon_\mu$ and $\varepsilon_\lambda$, respectively -- that
\begin{align}\label{3.19}
\| \sqrt{\rho^k} u^k(t) \|_{L^2} + \int_0^t \| u_t^k \|_{H^1}^2 \, ds \leq C (1+C_0) \exp\Big(\int_0^t M(\phi_K(s)) \, ds\Big) \quad \hbox{for all } 1\leq k\leq K.
\end{align}

\medskip

{\it Step 3. Estimate for $\Vert\rho^{k} \Vert_{W^{1,q_0}(\Omega)}$.} The continuity equation for the density is the same comparing compressible non-Newtonian and Newtonian fluids. Hence, the estimates here can be performed  
analogously to \cite[Estimate (3.23)]{choe_kim1}. 
Here, Assumption~\ref{assumption-elliptic-reg} for $p=q_0$ enters when estimating the left hand side in \eqref{eq:elliptic_estimate} from below as in \cite[Estimates (2.22) and (3.20) ff.]{choe_kim1} while the estimate of the right hand side remains as in \cite{choe_kim1}[Estimates (2.22) and (3.20) ff.] leading to  \cite[Estimate (3.23)]{choe_kim1}.
In particular, we have
\begin{equation}\label{3.23}
\| \rho^k(t) \|_{W^{1,q_0}} \leq C \exp\Big( C(1+C_0) \exp\Big( \int_0^t M(\phi_K) \ds \Big) \Big)
\quad \hbox{for all } 1\leq k\leq K.
\end{equation}
Thus, we conclude from \eqref{3.13} and \eqref{3.23} that 
\begin{equation*}
\phi_K(t) \leq C \exp\Big(C(1+\mathcal{C}_0) \exp \Big( C\int_0^t M(\phi_K(t)) \, \ds\Big)  \Big),
\end{equation*}
for some increasing $M(\cdot)$ as above.
Hence if we define $\psi_K(t)= \log (C^{-1} \log (C^{-1} \phi_K(t)))$, then we have
\begin{align*}
\psi_K(t) \leq \log (1+ C_0) + C \int_0^t M \Big( C\exp(C\exp(\psi_K(s))\Big) \ds.
\end{align*}
Thanks to this integral inequality, we deduce by a non-linear Gr\"onwall's inequality, cf. e.g. \cite[Theorem 4]{Dragomir},
that there exists a small time $T^\ast\in (0,T]$ depending only on $C_0$ and $C$ such that $\phi_K(T^\ast)\leq C \exp(C C_0)$. Moreover, the following estimates analogous to \cite[Eq. (3.24)]{choe_kim1} hold true
\begin{align}\label{3.24}
\begin{split}
\sup_{0\leq t \leq T^\ast}\Big[\Vert\rho^{k} \Vert_{W^{1,q}(\Omega)}&\,+\, \Vert\rho^{k}_{t} \Vert_{L^{q_0}(\Omega)} \,+\, \Vert u^{k} \Vert_{H^{1}(\Omega)}\,+\, \Vert \sqrt{\rho^{k}}u^{k}_{t} \Vert_{L^{2}(\Omega)}   \\
&+\int_{0}^{T^\ast}\Big(\Vert u^{k} \Vert^{2}_{W^{2,q_0}(\Omega)}\,+\, \Vert u^{k}_{t} \Vert^{2}_{H^{1}(\Omega)}\Big)\Big] 
\leq C\exp\Big( C \exp\big(C \mathcal{C}_0 \big) \Big).
\end{split}
\end{align}
Further details are skipped here and the reader is refer to \cite[Subsection 3.1]{choe_kim1} for full details. 
The estimates remain valid when taking the limit of the smooth approximations of $\mu$ and $\lambda$ to the actual less regular functions. The norm estimates imply that $T^\ast\leq T_k$ for all $k\geq 0$.  This finishes the proof of Lemma~\ref{lemma-estimates}.
\end{proof}

\subsection{Convergence of approximate solutions as $k\to\infty$}\label{sub:4.3}

\begin{lemma}\label{lemma:konvergence}
	Let  $\rho_0, u_0$ and $f$ as in Theorem~\ref{Main-Theorem}. Then for $\delta>0$ and  $\rho_0^\delta, u_0^\delta$ as in Subsection~\ref{subsec:approx_sol}	
	there exists unique solution to \eqref{depart} with regularity as in \eqref{reg-sol}.
\end{lemma}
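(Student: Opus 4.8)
The plan is to pass to the limit $k\to\infty$ in~\eqref{iterating-scheme} for a fixed $\delta>0$, using the $k$--independent bounds of Lemma~\ref{lemma-estimates}. First I would extract, along a subsequence, weak-$\ast$ limits $(\rho^k,u^k)\rightharpoonup(\rho,u)$ in all the spaces appearing in~\eqref{reg-sol}; by weak lower semicontinuity $(\rho,u)$ inherits these bounds on $[0,T^\ast]$, and since $\rho_0^\delta=\rho_0+\delta\geq\delta>0$ the transport equation forces $\rho^k\geq c(\delta)>0$, so that in particular $u^k_t\in L^\infty(0,T^\ast;L^2)$ for each $k$. The substance of the proof is then to show that the \emph{whole} sequence converges \emph{strongly} in a weaker norm, which is what allows one to pass to the limit in the nonlinear stress tensor $\Ss u^k$, in the convective term, and in the pressure $p(\rho^k)$. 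Following~\cite[Section~3]{choe_kim1} I would obtain this by a contraction estimate on consecutive differences.

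Set $\bar\rho^{k+1}:=\rho^{k+1}-\rho^k$ and $\bar u^{k+1}:=u^{k+1}-u^k$. Subtracting the $k$--th equations in~\eqref{iterating-scheme} from the $(k+1)$--st ones, $\bar\rho^{k+1}$ solves a linear transport equation with right-hand side $-\bar u^k\cdot\nabla\rho^k-\rho^k\divv\bar u^k$, while
\[
\rho^{k+1}\del_t\bar u^{k+1}+\rho^{k+1}u^k\cdot\nabla\bar u^{k+1}-\divv\big(\Ss u^{k+1}-\Ss u^k\big)=\mathcal R^{k+1},
\]
where $\mathcal R^{k+1}$ collects the lower-order terms $\bar\rho^{k+1}\big(f-u^k_t-u^{k-1}\cdot\nabla u^k\big)$, $-\rho^{k+1}\bar u^k\cdot\nabla u^k$ and $-\nabla\big(p(\rho^{k+1})-p(\rho^k)\big)$. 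I would test the transport equation with $\bar\rho^{k+1}$ in $L^2(\Omega)$ and the momentum equation with $\bar u^{k+1}$. The only genuinely new ingredient compared with the Newtonian case is that the ellipticity conditions~\eqref{eq:ellipticity2}--\eqref{eq:ellipticity} make $v\mapsto-\divv\Ss v$ strongly monotone in the symmetric gradient: the pointwise computation underlying Remark~\ref{rem:H2estimate} gives $\big(\Ss v_1-\Ss v_2\big):\nabla(v_1-v_2)\geq c_0\,\big|\D(v_1-v_2)\big|^2$ with $c_0=\min\{2\varepsilon_\mu,\,2\varepsilon_\mu+3\varepsilon^2_\lambda\}>0$, whence, by Korn's inequality on $\mathbb T^3$,
\[
\intO\big(\Ss u^{k+1}-\Ss u^k\big):\nabla\bar u^{k+1}\,\dx\;\geq\; c_1\,\norm{\nabla\bar u^{k+1}}_{L^2}^2 .
\]
Every term coming from $\mathcal R^{k+1}$ and from $\bar\rho^{k+1}u^k_t$ is then handled by H\"older, Sobolev and Young inequalities exactly as in~\cite[Eqs.~(3.14)--(3.19)]{choe_kim1}: one absorbs a small multiple of $\norm{\nabla\bar u^{k+1}}_{L^2}^2$ into the coercive term and bounds the rest by $A(t)\big(\norm{\bar\rho^{k+1}}_{L^2}^2+\norm{\bar u^{k+1}}_{L^2}^2+\norm{\bar u^k}_{H^1}^2\big)$ with $A\in L^1(0,T^\ast)$ controlled through~\eqref{eq3.24}. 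A Gr\"onwall-type argument then yields $\sum_k\big(\sup_{[0,T^\ast]}\norm{\bar\rho^{k+1}}_{L^2}+\sup_{[0,T^\ast]}\norm{\bar u^{k+1}}_{L^2}\big)<\infty$, so that $(\rho^k,u^k)$ converges strongly in $C([0,T^\ast];L^2(\Omega))^2$ for the whole sequence.

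Next I would upgrade this convergence: interpolating it against the uniform bound $u^k\in L^\infty(0,T^\ast;H^2)\cap L^2(0,T^\ast;W^{2,q_0})$ gives $\nabla u^k\to\nabla u$ in $C([0,T^\ast];L^2)$ and in $L^2(0,T^\ast;L^\infty)$, hence, along a subsequence, $\D u^k\to\D u$ and $\divv u^k\to\divv u$ almost everywhere; together with the $L^\infty$--bound on $\D u^k$ and the $C^1$--regularity of $\mu,\lambda,p$, dominated convergence yields $\Ss u^k\to\Ss u$ and $p(\rho^k)\to p(\rho)$ in $L^2((0,T^\ast)\times\Omega)$, while the convective and force terms pass to the limit by the same strong convergences. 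This identifies $(\rho,u)$ as a solution of~\eqref{depart}, and the time-continuity statements in~\eqref{reg-sol} are recovered as in~\cite{choe_kim1} (for $\rho$ from the transport equation together with $\rho_t\in L^\infty(0,T^\ast;L^{q_0})$, and for $u$ from the momentum equation together with $u_t\in L^2(0,T^\ast;H^1)$). Uniqueness within the class~\eqref{reg-sol} for fixed $\delta$ follows from the very same monotonicity-plus-Gr\"onwall computation applied to the difference of two such solutions.

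The step I expect to be the main obstacle is this contraction estimate for $\bar u^{k+1}$. Unlike in~\cite{choe_kim1}, one cannot integrate the stress term by parts into a fixed Laplacian; one has to rely entirely on the strong monotonicity of $v\mapsto-\divv\Ss v$ furnished by~\eqref{eq:ellipticity2}--\eqref{eq:ellipticity} to generate the coercive quantity $\norm{\D\bar u^{k+1}}_{L^2}^2$, and one must check that the mismatch produced by the nonlinear viscosities is \emph{entirely} absorbed into the monotone difference $\Ss u^{k+1}-\Ss u^k$, so that no uncontrolled term of the type $\big(\mu(|\D u^{k+1}|^2)-\mu(|\D u^k|^2)\big)\D u^k$ survives; the remaining error terms are then tamed by the $L^2(0,T^\ast;W^{2,q_0})$--regularity of the $u^k$ exactly as in the Newtonian argument.
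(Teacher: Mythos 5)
Your proposal takes essentially the same route as the paper: form the consecutive differences $(\overline{\rho}^{k+1},\overline{u}^{k+1})$, test with $\overline{u}^{k+1}$, use the strong monotonicity of $v\mapsto -\divv\Ss v$ (in the paper this is encoded via the mean-value/FTC representations \eqref{mu:A-B} and \eqref{lambda:a-b} of $\Ss u^{k+1}-\Ss u^k$, which together with \eqref{eq:ellipticity} yield exactly the coercivity \eqref{J-estimate}), close a Gr\"onwall contraction with coefficients made $L^1$-in-time by the uniform bound \eqref{eq3.24}, and then upgrade the convergence to pass to the limit in the nonlinearities. The only cosmetic differences are that you derive coercivity in $\norm{\D\bar u^{k+1}}^2_{L^2}$ and invoke Korn, whereas the paper integrates by parts to obtain $\norm{\nabla\bar u^{k+1}}^2_{L^2}$ and $\norm{\divv\bar u^{k+1}}^2_{L^2}$ directly; and you attribute the pointwise monotonicity to the computation in Remark~\ref{rem:H2estimate} rather than to \eqref{mu:A-B}--\eqref{lambda:a-b} — these are closely related but distinct calculations (also note the minor transcription slips $u^{k-1}\cdot\nabla u^k$ and $\rho^{k+1}\bar u^k\cdot\nabla u^k$ in your $\mathcal R^{k+1}$, which should read $u^k\cdot\nabla u^k$ and $\rho^k\bar u^k\cdot\nabla u^k$). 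None of this changes the substance.
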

\begin{proof}
We prove that the approximate solutions $(\rho^k, u^k)$, $k\geq 1$, constructed previously converges to a solution of the original problem \eqref{depart} in the strong sense as $k\to \infty$. 
To this end, let us define
\begin{align*}
\overline{\rho}^{k+1} \,=\, \rho^{k+1}\,-\, \rho^{k} \quad \mathrm{and} \quad \overline{u}^{k+1} \,=\, u^{k+1}\,-\, u^{k} \quad \hbox{for } k\in \NN.
\end{align*}
Then using the  momentum equation in \eqref{iterating-scheme}, we have
\begin{align*}
\rho^{k+1} \overline{u}_t^{k+1} +\rho^{k+1} u^k \cdot \nabla \overline{u}^{k+1} +\divv \Ss u^{k+1} -\divv \Ss u^k +\nabla (p^{k+1} -p^k)
=\overline{\rho}^{k+1} (f - u_t^k -u^k \cdot \nabla u^k) -\rho^k \overline{u}^k \cdot \nabla u^k.
\end{align*}
Taking the scalar product of the above equation by $\overline{u}^{k+1}$ and integrating over $\Omega$, we get
\begin{align}\label{diff-conv-est}
\begin{split}
\Dt \intO &\rho^{k+1} |\overline{u}^{k+1}|^2 \, \dx + \intO (\divv \Ss u^{k+1} - \divv \Ss u^k) \cdot \overline{u}^{k+1} \, \dx  \\
& \leq C \intO \Big(|\overline{\rho}^{k+1}| |f - u_t^k - u^k \cdot \nabla u^k| \, |\overline{u}^{k+1}| + |\rho^k| |\overline{u}^k| |\nabla u^k| |\overline{u}^{k+1}| + |p^{k+1} - p^k|^2 \Big) \dx.
\end{split}
\end{align}
Let us estimate the second term in the left-hand-side of estimate \eqref{diff-conv-est}. Indeed, we denote by $J$
\begin{multline*}
J:=\intO (\divv \Ss u^{k+1} - \divv \Ss u^k) \cdot \overline{u}^{k+1} \, \dx = 2\intO \Big(\mu(\vert \mathrm{D}(u^{k+1}) \vert^{2})\mathrm{D}(u^{k+1}) \,-\, \mu(\vert \mathrm{D}(u^{k}) \vert^{2})\mathrm{D}(u^{k})\Big): \nabla\overline{u}^{k+1}\,\mathrm{d}x\\
 +\intO \Big(\lambda(\mathrm{div}u^{k+1})\mathrm{div}u^{k+1} \,-\, \lambda(\mathrm{div}u^{k})\mathrm{div}u^{k}\Big)\mathrm{div}\overline{u}^{k+1}\,\mathrm{d}x.
 \end{multline*}
More generally, for $A$ and $B$ being two $d\times d-$matrix valued sufficiently smooth functions, we have  
\begin{align}\label{mu:A-B}
\begin{split}
\mu(\vert A\vert^{2})A \,-\, \mu(\vert B\vert^{2})B &= \int_{0}^{1}\frac{d}{d s}\Big(\mu(\vert s\,A \,+\, (1-s)\,B \vert^{2})(s\,A \,+\, (1-s)\,B)\Big)\,\mathrm{d}s\\
&=  \int_{0}^{1}\Big(\mu\big(\vert s\,A \,+\, (1-s)\,B \vert^{2}\big)  \,\, (A\,-\, B)\\
\hspace*{1.5cm} \, &+\,  2\mu'\big(\vert s\,A \,+\, (1-s)\,B \vert^{2}\big) \,\,\big( ( s\,A \,+\, (1-s)\,B) : (A-B)\big) \, (s\, A\,+(1-s)\, B)   \, \Big)\mathrm{d}s,
\end{split}
\end{align}
cf. also \cite[Equations (31)--(38)]{sarka}.
Similarly, for two sufficiently regular real valued functions $a$ and $b$, we have
\begin{align}\label{lambda:a-b}
\begin{split}
\lambda(a) a -\lambda(b) b &= \int_{0}^{1}\frac{d}{d s}\Big(\lambda(s\,a \,+\, (1-s)\,b)(s\,a \,+\, (1-s)\,b)\Big)\,\mathrm{d}x\\
&=\int_0^1\Big( \lambda(s\, a + (1-s)\, b) \, (a-b) \, + \,  \lambda^{\prime}(s\,a \,+\, (1-s)\,b)\,\,  (a-b)\,\, (s\,a \,+\, (1-s)\,b) \, \, \Big)\mathrm{d}s\\
&= \int_0^1 \Big(\lambda(s\, a + (1-s)\, b)  \, + \, (s\,a \,+\, (1-s)\,b) \, \,  \lambda^{\prime}(s\,a \,+\, (1-s)\,b) \Big) \,\, (a-b) \, \mathrm{d}s.
\end{split}
\end{align}
Again, depending on the sign of $\mu^\prime(\cdot)$, \eqref{eq:ellipticity} and by the virtue of \eqref{mu:A-B} we distinguish 
\begin{align*}
& \intO \Big(2\mu(\vert \mathrm{D}(u^{k+1}) \vert^{2})\mathrm{D}(u^{k+1}) \,-\, 2\mu(\vert \mathrm{D}(u^{k}) \vert^{2})\mathrm{D}(u^{k})\Big): \nabla\overline{u}^{k+1}\,\mathrm{d}x\\
& \geq \begin{cases}
	\displaystyle{\intO \int_0^1} 2\mu\big(\vert s\mathrm{D}(u^{k+1}) + (1-s)\mathrm{D}(u^{k})\vert^{2}\big) \; \vert\mathrm{D}(\overline{u}^{k+1})\vert^2   \,\ds \dx , & \hbox{if } \mu^\prime(\cdot) \geq 0, \vspace*{0.3cm}\\
	\displaystyle{\intO \int_0^1} 2\Big(\mu\big(\vert s\mathrm{D}(u^{k+1}) + (1-s)\mathrm{D}(u^{k})\vert^{2}\big) \vspace*{0.3cm} \\
	+ 2 \vert s\mathrm{D}(u^{k+1})+\, (1-s)\mathrm{D}(u^{k})\vert^{2} \, \mu'\big(\vert s\mathrm{D}(u^{k+1}) + (1-s)\mathrm{D}(u^{k})\vert^{2}\big) \Big)\,\, \vert\mathrm{D}(\overline{u}^{k+1})\vert^2   \, \,\ds\dx 	& \hbox{if } \mu'(\cdot) <0,
	\end{cases}\\
	& \geq 2\varepsilon_\mu\intO \vert\mathrm{D}(\overline{u}^{k+1})\vert^2\, \dx.
\end{align*}
Consequently, thanks to the above estimate and \eqref{lambda:a-b}, we infer that
\begin{equation}\label{J-estimate}
\begin{split}
J \geq 
\varepsilon_\mu \intO |\nabla \overline{u}^{k+1} |^2 \,\dx + (\varepsilon_\mu+\varepsilon_{\lambda})  \intO \vert \divv \overline{u}^{k+1} \vert^2 \,\dx.
\end{split}
\end{equation}
Thus, using \eqref{J-estimate} and following the same lines as in \cite[Subsection 3.2]{choe_kim1}  we deduce from \eqref{diff-conv-est} and \eqref{J-estimate} the analogue of  \cite[Eq. (3.25)]{choe_kim1}, that is, 
\begin{align}\label{3.25}
\begin{split}
\Dt \Vert \sqrt{\rho^{k+1}} \:\overline{u}^{k+1} \Vert_{L^2}^2 \, +  \Vert \nabla \overline{u}^{k+1} \Vert_{L^2}^2 \leq B^{k}(t)\:
  \Vert \overline{\rho}^{k+1} \Vert_{L^{2}}^2  \, + \tilde{C}\: \Vert \sqrt{\rho^k} \: \overline{u}^k \Vert_{L^2}^2,
  \end{split}
\end{align}
where $B^k(t) = \tilde{C}\big( 1 + \|f\|_{L^3}^2 + \|\nabla u_t^k \|_{L^2}^2 \big)$. Note that $\int_0^{T_\ast} B^k (t) \, dt\leq \tilde{C}$ for all $k\geq 1$, thanks to the uniform bound \eqref{3.24}. Here we denote by $\tilde{C}$ a generic positive constant depending only on $\mathcal{C}_0$ and parameters of $C$.

Meanwhile, since the density $\rho$ satisfies the same equation as in \cite{choe_kim1}, 
establishing the analogue of estimate \cite[Equation (3.28)]{choe_kim1} in our case is straight forward. Indeed, we deduce that for any $\varepsilon>0$
\begin{align}\label{3.27-28}
\begin{split}
\Dt \|\overline{\rho}^{k+1}\|_{L^2}^2 \leq E^k_\varepsilon(t) \, \Vert \overline{\rho}^{k+1} \Vert_{L^2}^2 + \varepsilon \Vert \nabla \overline{u}^k \Vert_{L^2}^2,
\end{split}
\end{align}
where $E_\varepsilon^k(t)= C_\varepsilon \big(\|\rho^k(t)\|_{L^\infty} +\Vert \nabla \rho^k(t)\Vert_{L^3}  \big)^2 + C \Vert \nabla u^k (t)  \Vert_{L^\infty}$ and $C_\varepsilon>0$. By virtue of estimate \eqref{3.24}, we have $\int_0^t E_\varepsilon^k(s)\,ds \leq \tilde{C} + \tilde{C}_\varepsilon t$ for all $t\leq T^\ast$ and $k\geq 1$.

By combining \eqref{3.25} and \eqref{3.27-28} we deduce the analogue of  \cite[Eq. (3.29)]{choe_kim1}, which is the key point to prove that $(\rho^k, u^k)$ converges to $(\rho, u)$ in a strong sense, where $(\rho, u)$ is the solution to the original problem \eqref{depart} with initial data $(\rho_0^\delta, u_0^\delta)$. Moreover, by the lower semi-continuity of the norm,  the couple $(\rho, u)$ enjoys the following regularity estimate
\begin{align}\label{3.30}
\begin{split}
\esssup_{0<t<T_{\ast}}\Big(\Vert \rho &\Vert_{ W^{1,q_{0}}(\Omega)} \,+\, \Vert \rho_t \Vert_{L^{q_{0}}(\Omega)}\,+\, \Vert  u\Vert_{ H^{2}(\Omega)} \,+\, \Vert \sqrt{\rho}u_{t} \Vert_{L^{2}(\Omega)}\Big)\\
&+\, \int_{0}^{T_{\ast}} \Big(\Vert u(t) \Vert^{2}_{W^{2,q_{0}}(\Omega)} \,+\, \Vert u_{t}(t) \Vert^{2}_{H^{1}(\Omega)} \Big)\mathrm{d}t  \,\leq\, C\,\exp\big(C\exp(C C_{0})\big).
\end{split}
\end{align}
Further details are omitted here.
\end{proof}

\subsection{Conclusion of the existence proof: $\delta\to 0$}

We finish this section by proving the existence of a solution to system \eqref{depart} with initial data $(\rho_0, u_0,f)$ fulfilling the hypothesis mentioned in Theorem \ref{Main-Theorem}.
For each small $\delta>0$, let $\rho_0^\delta$ and  $u_0^\delta$ be  as  in Subsection~\ref{subsec:approx_sol}. 
Then according to the previous subsections, we know that there exists a time $T_{\ast}\in (0,T]$ and a unique strong solution $(\rho^\delta, u^\delta)$ in $[0,T_{\ast}]\times \Omega$ to the problem \eqref{depart} with the initial data $(\rho_0, u_0)$ replaced by $(\rho_0^\delta, u_0^\delta)$. In the following, we prove that the solution $(\rho^\delta, u^\delta$) equipped with the initial data $(\rho_0^\delta, u_0^\delta)$ converges to $(\rho, u)$ equipped with $(\rho_0, u_0)$. We start with the convergence of initial data given in the lemma below.
\begin{lemma}[Convergence of $u_0^\delta$ and $\rho_0^\delta$]\label{lemma:convergence_u0delta}
Let $\rho_0^\delta$ and $u_0^\delta$ for $\delta>0$ be as above. Then as $\delta \to 0$
\begin{align*}
	\rho_0^\delta &\to \rho_0 \quad \hbox{in}\quad W^{s,q}(\Omega)\quad\hbox{for}\quad s\in [0,1],\\
 u_0^\delta &\to u_0 \quad \hbox{in}\quad H^{s}(\Omega)\quad\hbox{for}\quad s\in [0,2), 
 \quad \hbox{and}\quad (u_0^\delta)_\delta\subset H^2(\Omega) \quad \hbox{bounded}. 
\end{align*}
\end{lemma}
\begin{proof}
Starting with $\rho^\delta_0$, we know that since $\Omega$ is bounded that 
\begin{align}\label{eq:rho0delta}
	\norm{\rho_0^\delta-\rho_0}_{W^{s,q}(\Omega)} \; = C(\Omega, q)  \; \delta \to 0 \quad \hbox{as} \quad \delta\to 0 \quad\hbox{for}\quad s\in [0,1].
\end{align}
Turning now to $u_0^\delta$, we conclude from \eqref{eq:rho0delta} and the assumption in Theorem~\ref{Main-Theorem}
that
\begin{align}\label{eq:rho_delta}
	\begin{split}
	\norm{g(\rho_0^\delta)^{1/2} - g(\rho_0)^{1/2}}_{L^2(\Omega)} &\to 0 \quad  \hbox{as} \quad \delta\to 0, \quad \\
	\norm{\nabla p(\rho_0^\delta)-\nabla p(\rho_0)}_{L^q(\Omega)}= \norm{p'(\rho_0^\delta)\nabla \rho_0^\delta - p'(\rho_0)\nabla \rho_0}_{L^q(\Omega)}&\to 0 \quad \hbox{as} \quad \delta\to 0.
	\end{split}
\end{align}
In particular, using \eqref{initial-elliptic}    and
 Assumption~\ref{assumption-elliptic-reg} it follows that $u_0^\delta$ is  uniformly bounded in $H^2(\Omega)$ with respect to $\delta$.
Then by the Banach-Alaoglu theorem and compact embeddings there exist a limit function say $u^\prime$ and subsequences of $u_0^\delta$ (which with an abuse of notation we still name $u_0^\delta$) such that
\begin{align*}
  u_0^\delta  \rightharpoonup  u_0^\prime \quad \mbox{ in} \; H^2(\Omega), \qquad \mbox{and} \quad 
u_0^\delta \to u_0^\prime \quad  \mbox{ in} \; H^s(\Omega)\quad \hbox{for}\quad s\in [0,2).
\end{align*}
Next, to show that $u_0^\prime \equiv u_0$ one uses that
 $(\rho_0^\delta, u_0^\delta)$ and $(\rho_0,u_0)$ satisfies  \eqref{initial-elliptic} and \eqref{comp-cond}, respectively. Then by an estimate analogue to \eqref{J-estimate} below, one concludes that
\begin{multline*}
	\varepsilon_\mu \intO |\nabla (u_0^\delta-u_0) |^2 \,\dx + (\varepsilon_\mu+\varepsilon_{\lambda})  \intO \vert \divv (u_0^\delta-u_0) \vert^2 \,\dx \\ \leq \norm{u_0^\delta-u_0}_{L^2(\Omega)}\left(\norm{g(\rho_0^\delta)^{1/2} - g(\rho_0)^{1/2}}_{L^2(\Omega)} + \norm{\nabla p(\rho_0^\delta)-\nabla p(\rho_0)}_{L^2(\Omega)}\right) \to 0 \quad \hbox{as} \quad \delta\to 0,
\end{multline*}
because of \eqref{eq:rho_delta} and since $\norm{u_0^\delta-u_0}_{L^2(\Omega)}$ is bounded by construction.  So, by Poincar\'e's inequality $u_0^\delta \to u_0$ in $H^1(\Omega)$ for any subsequence, and because of the uniqueness of limits $u_0\equiv u_0^\prime$.
\end{proof}

We turn now to the convergence of $(\rho^\delta, u^\delta)$ to some $(\rho, u)$ as $\delta \to 0$. By the lower semi-continuity of the norm 
the corresponding solution $(\rho^\delta, u^\delta)$ satisfies the bound \eqref{3.30} 
with constants 
and $T_{\ast}$ independent of $\delta$. Therefore, by the Aubin-Lions lemma we  obtain a convergent subsequence $(\rho^{\delta_k}, u^{\delta_k})$
and a limit $(\rho,u)$
such that 
\begin{align}\label{eq:convergence_rho_u}
	\begin{split}
	\rho^{\delta_k} &\to \rho \quad \hbox{in} \quad L^\infty(0,T^\ast;W^{s,q_0}(\Omega)), \quad s\in [0,1), \\
	u^{\delta_k} &\to u \quad \hbox{in} \quad L^2(0,T^\ast;W^{1+\sigma,q_0}(\Omega)), \quad \sigma\in [0,1),
	\end{split}
\end{align}
as $\delta_k\to 0$, where compact Sobolev embeddings are used.

Consider now the weak formulation of \eqref{depart} with regularized data $(\rho_0^\delta), u_0^\delta)$. Indeed, for a.e. $t\in (0,T_\ast)$ we have

\begin{multline*}
  \int_{\Omega} \rho^\delta(t, x) u^\delta(t, x) \psi(t,x)\,dx -  \int_0^{t} \int_{\Omega}\rho^{\delta}(s,x) \del_t \psi(s,x) \, dx\, ds
- \int_0^{t}\int_\Omega \rho^\delta(s,x)  u^\delta(s,x) \cdot \nabla \psi(s,x) \, dx \, ds \\ =\int_\Omega\rho^\delta_0(x)\psi(0,x) \, dx, \\
\int_\Omega \rho^\delta(t, x) u^\delta(t, x) \cdot \varphi (t, x) \, dx-\int_0^{t} \int_\Omega \rho^\delta(s,x) u^\delta(s,x)\cdot \del_t \varphi(s,x)\ dx\, ds
\\
- \int_0^t \int_\Omega(\rho^\delta(s,x) u^\delta\otimes u^\delta(s,x)) : \nabla \varphi(s,x)\, dx\,ds  
 +\int_0^{t} \int_{\Omega} \Ss u^\delta(s,x) : \nabla \varphi(s,x)  \, dx \, ds
 \\
 =  \int_\Omega \rho^\delta_0 (x) u^\delta_0(x) \cdot \varphi (0,x) \, dx +
\int_0^{t} \int_\Omega \rho^\delta(s,x)  f(s,x)\cdot \varphi(s,x) \, dx \, ds,
\end{multline*}
for all smooth functions $\psi$ 
and all smooth vector fields $\varphi$ with support in $[0,T_\ast]$, respectively.
According to \cite{choe_kim1}, we know that the convergences above allow us to pass to the limit in all the terms of the weak formulation except the stress tensor which is the only term differing from the setting in \cite{choe_kim1}. 
However, for the stress tensor 
term we observe that
\begin{equation*}
    \| \Ss u^\delta \|_{L^{1}(0,T_*, L^{\infty}(\Omega)) \cap L^{2}(0,T_*, L^{6}(\Omega))} \leq C.
\end{equation*}
Moreover, by \eqref{eq:convergence_rho_u} we know that $\D(u^\delta)$ converges a.e. to $\D(u)$ in $(0, T^*) \times \Omega$, and thus by the continuity of $\mu(\cdot)$, we deduce that $\Ss u^\delta$ converges a.e. to $\Ss u$ in in $(0, T^*) \times \Omega$ too. Now, using Vitali's convergence therorem, we deduce that 
\[
\int_0^t \int_\Omega \Ss u^\delta : \nabla \varphi \, dx\, ds \to \int_0^t \int_\Omega \Ss u : \nabla \varphi  \, dx\, ds
\]
Hence, we can pass in the limit in all the terms of weak formulation. Thus $(\rho, u)$ is a weak solution to \eqref{depart} satisfying bounds \eqref{3.30} too, and hence a strong solution in the sense of Theorem~\ref{Main-Theorem}. 
This finishes the proof of the existence of solutions in Theorem~\ref{Main-Theorem}.

\subsection{Uniqueness and continuous dependence on the data} 
Let $\rho^k$ and $u^k$ for $k\in\{1,2\}$ be two solutions to \eqref{depart} with \eqref{reg-sol} to  data $\rho_0^k,u_0^k$ and $f^k$ satisfying \eqref{regcond1}--\eqref{comp-cond}. Then set
\begin{align*}
\overline{\rho} &= \rho^{2}\,-\, \rho^{1}, \quad \overline{u} \,=\, u^{2}\,-\, u^{1} \quad \mathrm{and} \quad \overline{f} \,=\, f^{2}\,-\, f^{1}, \\
\rho^\ast &=  \rho^{2}\,+\, \rho^{1}, \quad u^\ast \,=\, u^{2}\,+\, u^{1} \quad \mathrm{and} \quad f^\ast \,=\, f^{2}\,+\, f^{1}.
\end{align*}
Using the  momentum and continuity equations in \eqref{depart}, we have 
\begin{align*}
\rho^{\ast} \overline{u}_t -2(\divv \Ss u^{2} -\divv \Ss u^1) 
+\tfrac{1}{2}\rho^\ast u^\ast\nabla \overline{u}
+2\nabla (p^{2} -p^1)
=(\overline{\rho}f^\ast+\rho^\ast \overline{f}) - \overline{\rho}u_t^\ast  
-\overline{\rho}(u_2\nabla u_2+u_1\nabla u_1)
-\tfrac{1}{2}\rho^\ast\overline{u}\nabla u^\ast.
\end{align*}
This can 
be estimated adapting the calculations in Subsection~\ref{sub:4.3} which implies uniqueness of solutions and continuous dependence on the data.

\subsection{Blow-up criterion}
 We finish the proof of Theorem \ref{Main-Theorem} by showing that the solution $(\rho, u)$ of system \eqref{depart} constructed previously blows up in finite time if \eqref{blowup} holds. The proof is again very similar to the one in \cite{choe_kim1}. Indeed, suppose that $T^\ast<T$, and let us introduce the functions 
 \begin{align*}
 \Phi(t) &= 1  + \| \rho(t) \|_{W^{1,q_0}} + \| u(t) \|_{H^1}\quad \hbox{and} \\
  I(t) &= 1 + \| \rho(t) \|_{W^{1,q_0}} + \| \rho_t(t) \|_{L^2} + \| u(t) \|_{H^2} + \| \sqrt{\rho} u_t \|_{L^2} + \int_0^t \big( \| u(s) \|_{W^{2, q_0}}^2 + \| u_t(s) \|_{H^1}^2\big) \, \ds,
 \end{align*}
 for $0<t<T^\ast$. Let $\tau$ be a fixed time in $(0, T^\ast)$. Then $(\rho, u)$ is a strong solution to \eqref{depart} in $[\tau, T^\ast) \times \Omega$, which satisfies the regularity \eqref{reg-sol}. Following the same arguments as in Subsection ~\ref{sub4.1}, we can prove that for any $t\in (\tau, T^\ast)$ the following estimates hold
 \begin{align}
 \| \nabla u(t) \|_{H^1} &\leq C \big( 1+ \| \sqrt{\rho} u_t(t) \|_{L^2} \big) M(\Phi(t)), \label{4.5}\\
  \| \sqrt{\rho} u_t(t)\|_{L^2}^2 + \int_\tau^t \|\nabla u(s) \|_{L^2}^2 \, \ds &\leq C +C \, \| \sqrt{\rho} u_t (\tau) \|_{L^2}^2 + C \int_\tau^t \big( 1+ \| \sqrt{\rho} u_t \|_{L^2}^2\big) M(\Phi) \, ds, \label{4.6}\\
  \| \rho(t) \|_{W^{1,q_0}} &\leq C \exp\Big( C \int_0^t \|\nabla u(s) \|_{W^{1,q_0}} \, \ds \Big),\label{4.7} \quad \hbox {and} \\
  \| \nabla u(t) \|_{W^{1, q_0}} &\leq C \Big( \big( 1+ \|\sqrt{\rho} u_t(t) \|_{L^2}^2 \big) M(\Phi(t)) + \|f(t) \|_{L^{q_0}}^2 + \|\nabla u_t(t) \|_{L^2}^2 \Big)  \label{4.8}
 \end{align}
 for an increasing continuous function $M \colon [0, \infty) \rightarrow [0, \infty)$.
 
 By the virtue of Gr\"onwall's inequality, we deduce from \eqref{4.6} that 
 \begin{equation}\label{4.9}
 \| \sqrt{\rho} u_t (t) \|_{L^2}^2 + \int_0^t \|\nabla u_t \|_{L^2}^2 \, \ds \leq C I(\tau) \exp \Big( C T^* \: \underset{0\leq s\leq t}{\sup} \: M(\Phi(s) \Big).
 \end{equation}
 By combining \eqref{4.5}, \eqref{4.7}-\eqref{4.9}, and using the continuity equation, we deduce that for any $t\in (\tau, T^*)$, 
 \begin{equation}\label{4.10}
 I(t) \leq C I(\tau) \Big( \underset{0\leq s \leq t}{\sup} \: M(\Phi(s))\Big) \exp\Big( C T^* \underset{0\leq s \leq t}{\sup} \: M(\Phi(s))\Big).
 \end{equation}
Hence, the blow-up criterion \eqref{blowup} follows immediately  from \eqref{4.10} because the maximality of $T^*$ implies that $J(t) \rightarrow \infty$ as $t \rightarrow T^\ast$. The proof of Theorem \ref{Main-Theorem} is now complete.

\subsection*{Acknowledgements} We would like to thank Andrea Cianchi and Michael R\r{u}\v{z}i\v{c}ka for their valuable comments on non-linear elliptic estimates. Also, we appreciate the very helpful comments by the referee. 

\subsection*{Declaration: Conflicts of interests/Competing interests}
None.
\subsection*{Declaration: Data availability}
Not applicable.

\bibliographystyle{abbrv}
\bibliography{Main}
\end{document}